\def\eqref#1{equation~\ref{#1}}
\def\1{\bm{1}}
\DeclareMathAlphabet{\mathsfit}{\encodingdefault}{\sfdefault}{m}{sl}
\SetMathAlphabet{\mathsfit}{bold}{\encodingdefault}{\sfdefault}{bx}{n}
\newcommand{\E}{\mathbb{E}}
\newcommand{\R}{\mathbb{R}}
\DeclareMathOperator{\Tr}{Tr}
\newcommand{\mtx}[1]{\mathbf{#1}}
\renewcommand{\d}{\,\textup{d}}
\newcommand{\F}{\mathscr{F}}
\newcommand{\Frob}{\textup{F}}
\newtheorem{theorem}{Theorem}
\newtheorem{corollary}{Corollary}
\newtheorem{lemma}[theorem]{Lemma}
\newenvironment{proof}[1][Proof]{\textbf{#1.} }{\ \rule{0.5em}{0.5em}}
\newtheorem{proposition}[theorem]{Proposition}
\title{A generalization of the randomized singular value decomposition}
\author{Nicolas Boull\'e\\
Mathematical Institute\\
University of Oxford\\
Oxford, OX2 6GG, UK \\
\texttt{boulle@maths.ox.ac.uk}
\And
Alex Townsend\\
Department of Mathematics\\
Cornell University\\
Ithaca, NY 14853, USA\\
\texttt{townsend@cornell.edu}
}
\begin{document}

\maketitle

\begin{abstract}
The randomized singular value decomposition (SVD) is a popular and effective algorithm for computing a near-best rank $k$ approximation of a matrix $A$ using matrix-vector products with standard Gaussian vectors. Here, we generalize the randomized SVD to multivariate Gaussian vectors, allowing one to incorporate prior knowledge of $A$ into the algorithm. This enables us to explore the continuous analogue of the randomized SVD for Hilbert--Schmidt (HS) operators using operator-function products with functions drawn from a Gaussian process (GP). We then construct a new covariance kernel for GPs, based on weighted Jacobi polynomials, which allows us to rapidly sample the GP and control the smoothness of the randomly generated functions. Numerical examples on matrices and HS operators demonstrate the applicability of the algorithm.
\end{abstract}

\section{Introduction} \label{sec_intro}
Computing the singular value decomposition (SVD) is a fundamental linear algebra task in machine learning~\citep{paterek2007improving}, statistics~\citep{wold1987principal}, and signal processing~\citep{alter2000singular,van1993subspace}. The SVD of an $m\times n$ real matrix $A$ with $m\geq n$ is a factorization of the form $\mtx{A} = \mtx{U} \mtx{\Sigma} \mtx{V}^*$, where $\mtx{U}$ is an $m \times m$ orthogonal matrix of left singular vectors, $\mtx{\Sigma}$ is an $m \times n$ diagonal matrix with entries $\sigma_1(\mtx{A})\geq \cdots \geq \sigma_{n}(\mtx{A})\geq 0$, and $\mtx{V}$ is an $n\times n$ orthogonal matrix of right singular vectors~\citep{golub2013matrix}. The SVD plays a central role because truncating it after $k$ terms provides a best rank $k$ approximation to $\mtx{A}$ in the spectral and Frobenius norm~\citep{eckart1936approximation,mirsky1960symmetric}. Since computing the SVD of a large matrix can be computationally infeasible, there are various alternative algorithms that compute near-best rank $k$ matrix approximations from matrix-vector products~\citep{halko2011finding,martinsson2020randomized,nakatsukasa2020fast,nystrom1930praktische,williams2001using}. The randomized SVD uses matrix-vector products with standard Gaussian random vectors and is one of the most popular algorithms for constructing a low-rank approximation to $A$~\citep{halko2011finding,martinsson2020randomized}.

Currently, the randomized SVD is used and theoretically justified when it uses matrix-vector products with standard Gaussian random vectors. In this paper, we consider the following generalizations.\\
\textbf{Generalization 1.} We generalize the randomized SVD when the matrix-vector products are with multivariate Gaussian random vectors.  Our theory allows for multivariate Gaussian random input vectors that have a general symmetric positive semi-definite covariance matrix. A key novelty of our work is that prior knowledge of $\mtx{A}$ can be exploited to design covariance matrices that achieve lower approximation errors than the randomized SVD with standard Gaussian vectors.\\
\textbf{Generalization 2.} We generalize the randomized SVD to Hilbert--Schmidt (HS) operators~\citep{hsing2015theoretical}. We design a practical algorithm for learning HS operators using random input functions, sampled from a Gaussian process (GP). Examples of applications include learning integral kernels such as Green's functions associated with linear partial differential equations~\citep{boulle2021learning,boulle2021datadriven}.

The choice of the covariance kernel in the GP is crucial and impacts both the theoretical bounds and numerical results of the randomized SVD. This leads us to introduce a new covariance kernel based on weighted Jacobi polynomials for learning HS operators. One of the main advantages of this kernel is that it is directly expressed as a Karhunen--Lo\`eve expansion~\citep{karhunen1946lineare,loeve1946functions} so that it is faster to sample functions from the associated GP than using a standard squared-exponential kernel. In addition, we show that the smoothness of the functions sampled from a GP with the Jacobi kernel can be controlled as it is related to the decay rate of the kernel's eigenvalues.

\paragraph{Contributions.} We summarize our novel contributions as follows:
\begin{enumerate}
\item We provide new theoretical bounds for the randomized SVD for matrices or HS operators when using random input vectors generated from any multivariate Gaussian distribution.  This shows when it is beneficial to use nonstandard Gaussian random vectors in the randomized SVD for constructing low-rank approximations. 
\item We generalize the randomized SVD to HS operators and provide numerical examples to learn integral kernels.
\item We propose a covariance kernel based on weighted Jacobi polynomials and show that one can select the smoothness of the sampled random functions by choosing the decay rate of the kernel eigenvalues.
\end{enumerate}

\section{Background: The randomized SVD for matrices} 
The randomized SVD computes a near-best rank $k$ approximation to a matrix $A$. First, one performs the matrix-vector products $y_1 = \mtx{A}x_1,\ldots, y_{k+p} = \mtx{A}x_{k+p}$, where $x_1,\ldots,x_{k+p}$ are standard Gaussian random vectors with identically and independently distributed entries and $p\geq 1$ is an oversampling parameter. Then, one computes the economized QR factorization $\begin{bmatrix} y_1 & \cdots & y_{k+p} \end{bmatrix} = \mtx{Q}\mtx{R}$, before forming the rank $\leq k+p$ approximant $\mtx{Q}\mtx{Q}^*\mtx{A}$. Note that if $\mtx{A}$ is symmetric, one can form $\mtx{Q}\mtx{Q}^*\mtx{A}$ by computing $\mtx{Q}(\mtx{A}\mtx{Q})^*$ via matrix-vector products involving $\mtx{A}$. The quality of the rank $\leq k+p$ approximant $\mtx{Q}\mtx{Q}^*\mtx{A}$ is characterized by the following theorem.

\begin{theorem}[\cite{halko2011finding}] \label{thm:RandomizedSVD}
Let $\mtx{A}$ be an $m\times n$ matrix, $k\geq 1$ an integer, and choose an oversampling parameter $p\geq 4$. If $\mtx{\Omega}\in\mathbb{R}^{n\times (k+p)}$ is a standard Gaussian random matrix and $\mtx{Q}\mtx{R} = \mtx{A}\mtx{\Omega}$ is the economized QR decomposition of $\mtx{A}\mtx{\Omega}$, then for all $u,t\geq 1$,
\begin{equation} \label{eq:RandomizedSVDBound}
\| \mtx{A} - \mtx{Q}\mtx{Q}^*\mtx{A} \|_\Frob \leq \left(1 + t\sqrt{\frac{3k}{p+1}}\,\right) \sqrt{\sum_{j=k+1}^n \sigma_j^2(\mtx{A})}+ut\frac{\sqrt{k+p}}{p+1}\sigma_{k+1}(\mtx{A}),
\end{equation}
with failure probability at most $2t^{-p}+e^{-u^2}$.
\end{theorem}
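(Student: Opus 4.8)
The plan is to follow the now-standard two-step strategy for analysing the randomized SVD: first isolate a purely deterministic bound on the projection error in terms of the Gaussian test blocks, and then estimate those blocks with Gaussian concentration. Write the full SVD $\mtx{A} = \mtx{U}\mtx{\Sigma}\mtx{V}^*$ and partition $\mtx{\Sigma} = \operatorname{diag}(\mtx{\Sigma}_1,\mtx{\Sigma}_2)$ and $\mtx{V} = [\,\mtx{V}_1\ \mtx{V}_2\,]$ after the leading $k$ columns, so that $\mtx{\Sigma}_1 = \operatorname{diag}(\sigma_1,\dots,\sigma_k)$ and $\mtx{\Sigma}_2 = \operatorname{diag}(\sigma_{k+1},\dots,\sigma_n)$. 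With $\mtx{\Omega}_1 := \mtx{V}_1^*\mtx{\Omega}\in\R^{k\times(k+p)}$ and $\mtx{\Omega}_2 := \mtx{V}_2^*\mtx{\Omega}\in\R^{(n-k)\times(k+p)}$, the first step is the deterministic estimate
\[
\| \mtx{A} - \mtx{Q}\mtx{Q}^*\mtx{A} \|_\Frob^2 \;\le\; \|\mtx{\Sigma}_2\|_\Frob^2 \;+\; \|\mtx{\Sigma}_2\mtx{\Omega}_2\mtx{\Omega}_1^\dagger\|_\Frob^2 ,
\]
valid whenever $\mtx{\Omega}_1$ has full row rank. To obtain this I would use that $\mtx{Q}\mtx{Q}^*$ is the orthogonal projector onto $\operatorname{range}(\mtx{A}\mtx{\Omega})$, so $\mtx{Q}\mtx{Q}^*\mtx{A}$ is a best Frobenius-norm approximant of $\mtx{A}$ from that subspace; rotating by $\mtx{U}^*$ and $\mtx{V}$ and testing the resulting minimization against the element of $\operatorname{range}(\mtx{A}\mtx{\Omega})$ corresponding to $\mtx{X} = \mtx{\Omega}_1^\dagger[\,\mtx{I}_k\ \mtx{0}\,]$ produces exactly the right-hand side, using $\mtx{\Omega}_1\mtx{\Omega}_1^\dagger = \mtx{I}_k$.

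The second step is probabilistic. Since the rows of $\mtx{V}^*$ are orthonormal and $\mtx{\Omega}$ has i.i.d.\ standard normal entries, rotational invariance shows $\mtx{\Omega}_1$ and $\mtx{\Omega}_2$ are independent standard Gaussian matrices; in particular $\mtx{\Omega}_1$ has full row rank almost surely, so the deterministic bound applies. It remains to control $\|\mtx{\Sigma}_2\mtx{\Omega}_2\mtx{\Omega}_1^\dagger\|_\Frob$. Conditioning on $\mtx{\Omega}_1$, the map $\mtx{G}\mapsto\|\mtx{\Sigma}_2\mtx{G}\mtx{\Omega}_1^\dagger\|_\Frob$ is Lipschitz with constant $\|\mtx{\Sigma}_2\|_2\|\mtx{\Omega}_1^\dagger\|_2 = \sigma_{k+1}(\mtx{A})\|\mtx{\Omega}_1^\dagger\|_2$, and has mean at most $\|\mtx{\Sigma}_2\|_\Frob\|\mtx{\Omega}_1^\dagger\|_\Frob$ because $\E\|\mtx{\Sigma}_2\mtx{G}\mtx{\Omega}_1^\dagger\|_\Frob^2 = \|\mtx{\Sigma}_2\|_\Frob^2\|\mtx{\Omega}_1^\dagger\|_\Frob^2$. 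Gaussian concentration for Lipschitz functions then yields, conditionally on $\mtx{\Omega}_1$,
\[
\|\mtx{\Sigma}_2\mtx{\Omega}_2\mtx{\Omega}_1^\dagger\|_\Frob \;\le\; \|\mtx{\Sigma}_2\|_\Frob\,\|\mtx{\Omega}_1^\dagger\|_\Frob \;+\; u\,\sigma_{k+1}(\mtx{A})\,\|\mtx{\Omega}_1^\dagger\|_2
\]
outside an event of probability $e^{-u^2}$ (after the customary rescaling of $u$ in the concentration constant). Finally I would invoke the tail bounds for the pseudoinverse of a $k\times(k+p)$ standard Gaussian matrix with $p\ge4$, namely $\|\mtx{\Omega}_1^\dagger\|_\Frob \le t\sqrt{3k/(p+1)}$ and $\|\mtx{\Omega}_1^\dagger\|_2 \le t\sqrt{k+p}/(p+1)$, each valid outside an event of probability $O(t^{-p})$; a union bound places both off an event of probability at most $2t^{-p}$.

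Combining everything, on the intersection of the two good events (probability at least $1-2t^{-p}-e^{-u^2}$) I would substitute the last three displays into the deterministic bound and apply $\sqrt{a^2+b^2}\le a+b$ for $a,b\ge0$ together with $\|\mtx{\Sigma}_2\|_\Frob = \big(\sum_{j=k+1}^n\sigma_j^2(\mtx{A})\big)^{1/2}$, which gives exactly \eqref{eq:RandomizedSVDBound}. The routine parts are the rotational-invariance reduction, the deterministic projection bound, and the Lipschitz concentration estimate. The main obstacle is the sharp control of the Gaussian pseudoinverse: showing that $\P\{\|\mtx{\Omega}_1^\dagger\|_\Frob \ge t\sqrt{3k/(p+1)}\}$ and $\P\{\|\mtx{\Omega}_1^\dagger\|_2 \ge t\sqrt{k+p}/(p+1)\}$ are $O(t^{-p})$ requires identifying the distribution of $\mtx{\Omega}_1\mtx{\Omega}_1^*$ (an inverse-Wishart computation, equivalently a bound on negative moments of the least singular value of a rectangular Gaussian matrix) and then applying Markov's inequality to a suitable power; this is the step where the oversampling hypothesis $p\ge4$ and the constant $3$ are forced.
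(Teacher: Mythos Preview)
The paper does not prove this theorem; it is stated as background and attributed to \cite{halko2011finding}, so there is no ``paper's own proof'' to compare against. Your proposal is a faithful reconstruction of the original Halko--Martinsson--Tropp argument (their Theorems~9.1 and~10.8 together with Propositions~10.3 and~10.4): the deterministic error bound via the partitioned SVD, rotational invariance to decouple $\mtx{\Omega}_1$ and $\mtx{\Omega}_2$, Gaussian Lipschitz concentration conditional on $\mtx{\Omega}_1$, and inverse-Wishart/negative-moment tail bounds on $\|\mtx{\Omega}_1^\dagger\|_\Frob$ and $\|\mtx{\Omega}_1^\dagger\|_2$. The outline is correct and complete; the only point worth tightening is the concentration constant (standard Gaussian Lipschitz concentration gives $e^{-u^2/2}$, so matching the stated $e^{-u^2}$ requires either the $\sqrt{2}$-rescaling you allude to or accepting the $e^{-u^2/2}$ form as in the original HMT statement).
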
 

The squared tail of the singular values of $\mtx{A}$, i.e.,~$\smash{\sqrt{\sum_{j=k+1}^n\sigma_j^2(\mtx{A})}}$, gives the best rank $k$ approximation error to $\mtx{A}$ in the Frobenius norm. This result shows that the randomized SVD can compute a near-best low-rank approximation to $\mtx{A}$ with high probability.  

\section{Generalized randomized SVD for matrices and operators} \label{sec_random_svd}

The theory behind the randomized SVD has been recently extended to nonstandard covariance matrices and HS operators~\citep{boulle2021learning}. However, the probability bounds, generalizing \cref{thm:RandomizedSVD}, are not sharp enough to emphasize the improved performance of covariance matrices with prior information over the standard randomized SVD. We provide new bounds for GPs with nonstandard covariance matrices in \cref{th_tropp_random_svd_Frob}. An upper bound on the expectation is also available in the Appendix (see~\cref{prop_control_matrices}). While \cref{th_tropp_random_svd_Frob} is formulated with matrices, the same result holds for HS operators in infinite dimensions.

For a fixed target rank $1\leq k\leq n$, we define $\mtx{V}_1\in \R^{n\times k}$ and $\mtx{V}_2\in\R^{n\times (n-k)}$ to be the matrices containing the first $k$ and last $n-k$ right singular vectors of $\mtx{A}$, respectively, and denote by $\mtx{\Sigma}_2\in\R^{(n-k)\times (n-k)}$, the diagonal matrix with entries $\sigma_{k+1}(\mtx{A}),\ldots,\sigma_{n}(\mtx{A})$. We consider a symmetric positive semi-definite covariance matrix $\mtx{K}\in\R^{n\times n}$, with $k$th largest eigenvalue $\lambda_k>0$.

\begin{theorem}  \label{th_tropp_random_svd_Frob}
Let $\mtx{A}$ be an $m\times n$ matrix, $k\geq 1$ an integer, and choose an oversampling parameter $p\geq 4$. If $\mtx{\Omega}\in\mathbb{R}^{n\times (k+p)}$ is a Gaussian random matrix, where each column is sampled from a multivariate Gaussian distribution with covariance matrix $\mtx{K}\in\mathbb{R}^{n\times n}$, and $\mtx{Q}\mtx{R} = \mtx{A}\mtx{\Omega}$ is the economized QR decomposition of $\mtx{A}\mtx{\Omega}$, then for all $u,t\geq 1$,
\begin{equation} \label{eq:MainProbabilityBound}
\|\mtx{A}-\mtx{Q}\mtx{Q}^*\mtx{A}\|_{\textup{F}} \leq \left(1+ut\sqrt{(k+p)\frac{3k}{p+1}\frac{\beta_k}{\gamma_k}}\,\right)\sqrt{\sum_{j=k+1}^n\sigma_j^2(\mtx{A})},
\end{equation}
with failure probability at most $t^{-p}+[ue^{-(u^2-1)/2}]^{k+p}$. Here, $\gamma_k=k/(\lambda_1 \Tr((\mtx{V}_1^*\mtx{K}\mtx{V}_1)^{-1})))$ denotes the covariance quality factor, and $\beta_k = \Tr(\mtx{\Sigma}_2^2\mtx{V}_2^*\mtx{K}\mtx{V}_2)/(\lambda_1\|\mtx{\Sigma}_2\|_\Frob^2)$, where $\lambda_1$ is the largest eigenvalue of $\mtx{K}$.
\end{theorem}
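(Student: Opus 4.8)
The plan is to adapt the probabilistic analysis of the randomized SVD from \cite{halko2011finding}: reduce to a purely deterministic error estimate, split the resulting random quantity into a numerator depending only on $\mtx{V}_2^*\mtx{\Omega}$ and a denominator depending only on the pseudoinverse of $\mtx{V}_1^*\mtx{\Omega}$, and bound each separately in probability. The essential new feature compared with the standard Gaussian case is that $\mtx{V}_1^*\mtx{\Omega}$ and $\mtx{V}_2^*\mtx{\Omega}$ are now correlated; splitting by submultiplicativity of the Frobenius norm is precisely what lets us ignore this correlation and use only the marginal laws of the two factors together with a union bound.

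First I would write $\mtx{\Omega}_1=\mtx{V}_1^*\mtx{\Omega}\in\R^{k\times(k+p)}$ and $\mtx{\Omega}_2=\mtx{V}_2^*\mtx{\Omega}\in\R^{(n-k)\times(k+p)}$, whose columns are i.i.d.\ $\mathcal{N}(\vzero,\mtx{V}_1^*\mtx{K}\mtx{V}_1)$ and $\mathcal{N}(\vzero,\mtx{V}_2^*\mtx{K}\mtx{V}_2)$ respectively. Since $\mtx{V}_1^*\mtx{K}\mtx{V}_1\succ\mtx{0}$ (otherwise $\gamma_k=0$ and the claim is vacuous) and $k+p\geq k$, the matrix $\mtx{\Omega}_1$ has full row rank almost surely, and the deterministic error bound behind \cref{thm:RandomizedSVD} (which uses only linear algebra; see \cite{halko2011finding}) gives
\[
\|\mtx{A}-\mtx{Q}\mtx{Q}^*\mtx{A}\|_\Frob^2 \;\leq\; \|\mtx{\Sigma}_2\|_\Frob^2 + \big\|\mtx{\Sigma}_2\mtx{\Omega}_2\mtx{\Omega}_1^\dagger\big\|_\Frob^2 .
\]
Because $\|\mtx{\Sigma}_2\|_\Frob^2=\sum_{j=k+1}^n\sigma_j^2(\mtx{A})$, $\sqrt{1+x}\leq1+\sqrt{x}$, and $\big\|\mtx{\Sigma}_2\mtx{\Omega}_2\mtx{\Omega}_1^\dagger\big\|_\Frob\leq\|\mtx{\Sigma}_2\mtx{\Omega}_2\|_\Frob\,\|\mtx{\Omega}_1^\dagger\|_\Frob$, it suffices to show that, off an event of probability at most $t^{-p}+[ue^{-(u^2-1)/2}]^{k+p}$, one has $\|\mtx{\Sigma}_2\mtx{\Omega}_2\|_\Frob^2\,\|\mtx{\Omega}_1^\dagger\|_\Frob^2\leq(k+p)u^2t^2\tfrac{3k}{p+1}\tfrac{\beta_k}{\gamma_k}\|\mtx{\Sigma}_2\|_\Frob^2$.

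For the numerator, $\|\mtx{\Sigma}_2\mtx{\Omega}_2\|_\Frob^2$ is a sum of $k+p$ i.i.d.\ Gaussian quadratic forms, and diagonalizing $(\mtx{V}_2^*\mtx{K}\mtx{V}_2)^{1/2}\mtx{\Sigma}_2^2(\mtx{V}_2^*\mtx{K}\mtx{V}_2)^{1/2}$ shows $\|\mtx{\Sigma}_2\mtx{\Omega}_2\|_\Frob^2\overset{d}{=}\sum_\ell\nu_\ell Z_\ell$ with $Z_\ell$ independent $\chi^2_{k+p}$ and $\sum_\ell\nu_\ell=\Tr(\mtx{\Sigma}_2^2\mtx{V}_2^*\mtx{K}\mtx{V}_2)=\lambda_1\beta_k\|\mtx{\Sigma}_2\|_\Frob^2$. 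By the Weierstrass inequality $\prod_\ell(1-x_\ell)\geq1-\sum_\ell x_\ell$, the moment generating function of $\sum_\ell\nu_\ell Z_\ell$ is dominated by that of $(\sum_\ell\nu_\ell)\chi^2_{k+p}$, so the Chernoff estimate $\P\{\chi^2_{k+p}\geq(k+p)u^2\}\leq[ue^{-(u^2-1)/2}]^{k+p}$ yields $\P\{\|\mtx{\Sigma}_2\mtx{\Omega}_2\|_\Frob^2\geq(k+p)u^2\lambda_1\beta_k\|\mtx{\Sigma}_2\|_\Frob^2\}\leq[ue^{-(u^2-1)/2}]^{k+p}$. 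For the denominator, $\mtx{\Omega}_1$ has the law of $(\mtx{V}_1^*\mtx{K}\mtx{V}_1)^{1/2}\mtx{G}$ for standard Gaussian $\mtx{G}\in\R^{k\times(k+p)}$, so $\|\mtx{\Omega}_1^\dagger\|_\Frob^2=\Tr((\mtx{\Omega}_1\mtx{\Omega}_1^*)^{-1})=\Tr((\mtx{V}_1^*\mtx{K}\mtx{V}_1)^{-1}(\mtx{G}\mtx{G}^*)^{-1})$, and a weighted version of the Gaussian-pseudoinverse tail estimate of \cite{halko2011finding} (which, when $\mtx{V}_1^*\mtx{K}\mtx{V}_1=\mtx{I}$, is the bound $\|\mtx{G}^\dagger\|_\Frob^2\leq 3kt^2/(p+1)$ with probability $\geq1-t^{-p}$ underlying the factor $t\sqrt{3k/(p+1)}$ in \cref{thm:RandomizedSVD}) gives, for $p\geq4$,
\[
\P\Big\{\|\mtx{\Omega}_1^\dagger\|_\Frob^2\geq\tfrac{3t^2}{p+1}\Tr\big((\mtx{V}_1^*\mtx{K}\mtx{V}_1)^{-1}\big)\Big\}\leq t^{-p},\qquad \Tr\big((\mtx{V}_1^*\mtx{K}\mtx{V}_1)^{-1}\big)=\tfrac{k}{\lambda_1\gamma_k}.
\]
Multiplying the two bounds, $\lambda_1$ cancels and the union bound gives $\big\|\mtx{\Sigma}_2\mtx{\Omega}_2\mtx{\Omega}_1^\dagger\big\|_\Frob^2\leq(k+p)u^2t^2\tfrac{3k}{p+1}\tfrac{\beta_k}{\gamma_k}\|\mtx{\Sigma}_2\|_\Frob^2$ off an event of probability $t^{-p}+[ue^{-(u^2-1)/2}]^{k+p}$, which together with the deterministic estimate is \eqref{eq:MainProbabilityBound}.

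The step I expect to be the main obstacle is the denominator tail estimate: extending the Gaussian-pseudoinverse concentration of \cite{halko2011finding} to a nonstandard column covariance while retaining the sharp quantity $\Tr((\mtx{V}_1^*\mtx{K}\mtx{V}_1)^{-1})$ rather than a lossy surrogate such as $k\,\|(\mtx{V}_1^*\mtx{K}\mtx{V}_1)^{-1}\|_2$. Factoring $\|\mtx{G}^\dagger(\mtx{V}_1^*\mtx{K}\mtx{V}_1)^{-1/2}\|_\Frob$ through an operator norm of $\mtx{G}^\dagger$ discards the correct dependence on $\mtx{K}$, so one must estimate $\Tr((\mtx{V}_1^*\mtx{K}\mtx{V}_1)^{-1}(\mtx{G}\mtx{G}^*)^{-1})$ directly — e.g.\ from moment bounds using that the diagonal entries of the inverse Wishart matrix $(\mtx{G}\mtx{G}^*)^{-1}$ are marginally distributed as $1/\chi^2_{p+1}$, followed by Markov's inequality at a suitable order, exactly the route taken in the identity-covariance case. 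Finally, one checks that everything transfers to Hilbert--Schmidt operators: the deterministic bound and the QR and pseudoinverse manipulations only involve the $(k+p)$-dimensional range of $\mtx{A}\mtx{\Omega}$ and the finite matrices $\mtx{\Omega}_1,\mtx{\Omega}_1^\dagger$, so the infinite-dimensionality of $\mtx{V}_2$ and $\mtx{\Omega}_2$ causes no difficulty.
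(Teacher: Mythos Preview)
Your proposal is correct and follows essentially the same route as the paper: the deterministic bound $\|\mtx{A}-\mtx{Q}\mtx{Q}^*\mtx{A}\|_\Frob^2\leq\|\mtx{\Sigma}_2\|_\Frob^2+\|\mtx{\Sigma}_2\mtx{\Omega}_2\|_\Frob^2\|\mtx{\Omega}_1^\dagger\|_\Frob^2$ via submultiplicativity, a Chernoff/MGF bound on $\|\mtx{\Sigma}_2\mtx{\Omega}_2\|_\Frob^2$ using the Weierstrass product inequality (the paper's \cref{prop_control_matrices}, with the substitution $u^2=1+s$), the weighted pseudoinverse tail $\P\{\|\mtx{\Omega}_1^\dagger\|_\Frob^2>3t^2\Tr((\mtx{V}_1^*\mtx{K}\mtx{V}_1)^{-1})/(p+1)\}\leq t^{-p}$ (cited from \cite{boulle2021learning} as \cref{lem_3_5}), and a union bound. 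Your identification of the denominator estimate as the nontrivial ingredient, and of the inverse-Wishart moment route to proving it, is exactly right; the paper simply imports that lemma rather than reproving it.
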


The factors $\gamma_k$ and $\beta_k$, measuring the quality of the covariance matrix to learn $\mtx{A}$ in \cref{th_tropp_random_svd_Frob}, can be respectively bounded (\citealt[Lem.~2]{boulle2021learning}; \cref{lemm_beta_bound}) using the eigenvalues $\lambda_1\geq \cdots\geq \lambda_n$ of the covariance matrix $\mtx{K}$ and the singular values of $\mtx{A}$ as:
\[\frac{1}{\gamma_k}\leq \frac{1}{k}\sum_{j=n-k+1}^{n}\frac{\lambda_1}{\lambda_j},\qquad \beta_k\leq \sum_{j=k+1}^{n}\frac{\lambda_{j-k}}{\lambda_1}\sigma_j^2(\mtx{A})\bigg/ \sum_{j=k+1}^n\sigma_j^2(\mtx{A}).\]
This shows that the performance of the generalized randomized SVD depends on the decay rate of the sequence $\{\lambda_j\}$. The quantities $\alpha_k$ and $\beta_k$ depend on how much prior information of the $k+1,\ldots,n$ right singular vectors of $\mtx{A}$ is encoded in $\mtx{K}$. In the ideal situation where these singular vectors are known, then one can define $\mtx{K}$ such that $\beta_k=0$ for $\lambda_{k+1}=\cdots=\lambda_n=0$. In particular, this highlights that a suitably chosen covariance matrix can outperform the randomized SVD with standard Gaussian vectors (see \cref{sec_exp_matrix} for a numerical example).


\subsection{Randomized SVD for Hilbert--Schmidt operators} \label{sec_approx_HS}

We now describe the randomized SVD for learning HS operators (see \cref{alg_SVD}). The algorithm is implemented in the Chebfun software system~\citep{driscoll2014chebfun}, which is a MATLAB package for computing with functions. The Chebfun implementation of the randomized SVD for HS operators uses Chebfun's capabilities, which offer continuous analogues of several matrix operations like the QR decomposition and numerical integration. Indeed, the continuous analogue of a matrix-vector multiplication $\mtx{A}\mtx{\Omega}$ for an HS integral operator $\F$ (see~\citealp{hsing2015theoretical} for definitions and properties of HS operators), with kernel $G:D\times D\to \R$, is
\[(\F f)(x) = \int_D G(x,y)f(y)\d y, \quad x\in D,\quad f\in L^2(D),\]
where $D\subset\R^d$ with $d\geq 1$, and $L^2(D)$ is the space of square-integrable functions on $D$.

\renewcommand{\algorithmicrequire}{\textbf{Input:}}
\renewcommand{\algorithmicensure}{\textbf{Output:}}
\begin{algorithm}
\caption{Randomized SVD for HS operators}\label{alg_SVD}
\begin{algorithmic}[1]
\Require HS integral operator $\F$ with kernel $G(x,y)$, number of samples $k>0$
\Ensure Approximation $G_k$ of $G$
\State Define a GP covariance kernel $K$
\State Sample the GP $k$ times to generate a quasimatrix of random functions $\Omega=[f_1 \ldots  f_k]$
\State Evaluate the integral operator at $\Omega$, $Y = [\F(f_1)\ldots\F(f_k)]$
\State Orthonormalize the columns of $Y$, $Q=\text{orth}(Y)=[q_1\ldots q_k]$
\State Compute an approximation to $G$ by evaluating the adjoint of $\F$
\State Initialize $G_k(x,y)$ to $0$
\For{$i=1:k$}
\State $G_k(x,y) \gets G_k(x,y) + q_i(x)\int_D G(z,y)q_i(z)\d z$
\EndFor
\end{algorithmic}
\end{algorithm}

The algorithm takes as input an integral operator that we aim to approximate. Note that we focus here on learning an integral operator, but other HS operators would work similarly.
The first step of the randomized SVD for HS operators consists of generating an $\infty\times k$ quasimatrix $\Omega$ by sampling a GP $k$ times, where $k$ is the target rank (see \cref{sec_cov_kernel}). An $\infty\times k$ quasimatrix is an ordered set of $k$ functions~\citep{townsend2015continuous}, and generalizes the notion of matrix to infinite dimensions. Therefore, each column of $\Omega$ is an object, consisting of a polynomial approximation of a smooth random function sampled from the GP in the Chebyshev basis. After evaluating the HS operator at $\Omega$ to obtain a quasimatrix $Y$, we use the QR algorithm~\citep{townsend2015continuous} to obtain an orthonormal basis $Q$ for the range of the columns of $Y$. Then, the randomized SVD for HS operators requires the left-evaluation of the operator $\F$ or, equivalently, the evaluation of its adjoint $\F_t$ satisfying:
\[(\F_t f)(x) = \int_D G(y,x)f(y)\d y, \quad x\in D.\]
We evaluate the adjoint of $\F$ at each column vector of $Q$ to construct an approximation $G_k$ of $G$. 
Finally, the approximation error between the operator kernel $G$ and the learned kernel $G_k$ can be computed in the $L^2$-norm, corresponding to the HS norm of the integral operator.

\section{Covariance kernels}  \label{sec_cov_kernel}

To generate the random input functions $f_1,\ldots,f_k$ for the randomized SVD for HS operators, we draw them from a GP, denoted by $\mathcal{GP}(0,K)$, for a certain covariance kernel $K$. A widely employed covariance kernel is the squared-exponential function $K_{\text{SE}}$~\citep{rasmussen2006gaussian} given by
\begin{equation} \label{eq_SE_kernel}
K_{\text{SE}}(x,y) = \exp\left(-|x-y|^2/(2\ell^2)\right),\quad x,y\in D,
\end{equation}
where $\ell>0$ is a parameter controlling the length-scale of the GP. This kernel is isotropic as it only depends on $|x-y|$, is infinitely differentiable, and its eigenvalues decay supergeometrically to $0$. Since the bound in~\cref{th_tropp_random_svd_Frob} degrades as the ratio $\lambda_1/\lambda_j$ increases for $j\geq k+1$, the randomized SVD for learning HS operators prefers covariance kernels with slowly decaying eigenvalues. 
Our randomized SVD cannot hope to learn HS operators where the range of the operator has a rank greater than $\tilde{k}$, where $\tilde{k}$ is such that the $\tilde{k}$th eigenvalue of $K_{\text{SE}}$ reaches machine precision. 

Other popular kernels for GPs include the Mat\'ern kernel~\cite[Chapt.~4]{rasmussen2006gaussian} and Brownian bridge~\citep{nelsen2020random}. Prior information on the HS operator can also be enforced through the choice of the covariance kernel. For instance, one can impose the periodicity of the samples by using the following squared-exponential periodic kernel:
\[K_\text{Per}(x,y) = \exp\left(-\frac{2}{\ell^2}\sin^2\left(\frac{x-y}{2}\right)\right),\quad x,y\in D,\]
where $\ell>0$ is the length-scale parameter.

\subsection{Sample random functions from a Gaussian process} \label{sec_numer_GP}

In finite dimensions, a random vector $u\sim \mathcal{N}(0,\mtx{K})$, where $\mtx{K}\in \R^{n\times n}$ is a covariance matrix with Cholesky factorization $\mtx{K}=\mtx{L}\mtx{L}^*$, can be generated from the matrix-vector product $u = \mtx{L} c$. Here, $c\in \R^n$ is a vector whose entries follow the standard Gaussian distribution. We now detail how this process extends to infinite dimensions with a continuous covariance kernel. Let $K$ be a continuous symmetric positive-definite covariance function defined on the domain $[a,b]\times [a,b]\subset \R^2$ with $-\infty < a < b < \infty$. We consider the continuous analogue of the Cholesky factorization to write $K$ as~\citep{townsend2015continuous}
\[K(x,y) = \sum_{j=1}^\infty r_j(x) r_j(y) = L_c(x)L_c^*(y), \quad x,y\in [a,b],\]
where $r_j$ is the $j$th row of $L_c$, which ---in Chebfun's terminology--- is a lower-triangular quasimatrix. In practice, we truncate the series after $n$ terms, either arbitrarily or when the $n$th largest kernel eigenvalue, $\lambda_n$, falls below machine precision. Then, if $c\in\R^n$ follows the standard Gaussian distribution, a function $u$ can be sampled from $\mathcal{GP}(0,K)$ as $u=L_c c$. That is,
\[u(x) = \sum_{j=1}^n c_j r_j(x), \quad x\in [a,b].\]
The continuous Cholesky factorization is implemented in Chebfun2~\citep{townsend2013extension}, which is the extension of Chebfun for computing with two-dimensional functions. As an example, the polynomial approximation, which is accurate up to essentially machine precision, of the squared-exponential covariance kernel $K_{\text{SE}}$ with parameter $\ell = 0.01$ on $[-1,1]^2$ yields a numerical rank of $n = 503$. The functions sampled from $\mathcal{GP}(0,K_{\text{SE}})$ become more oscillatory as the length-scale parameter $\ell$ decreases and hence the numerical rank of the kernel increases or, equivalently, the associated eigenvalues sequence $\{\lambda_j\}$ decays slower to zero.

\subsection{Influence of the kernel's eigenvalues and Mercer's representation} \label{sec_choice_lambda}

The covariance kernel can also be defined from its Mercer's representation as
\begin{equation} \label{eq:KLexpansion}
K(x,y)=\sum_{j=1}^\infty\lambda_j \psi_j(x)\psi_j(y),\quad x,y\in D,
\end{equation} 
where $\{\psi_j\}$ is an orthonormal basis of $L^2(D)$ and $\lambda_1\geq \lambda_2\geq \cdots> 0$~\cite[Thm.~4.6.5]{hsing2015theoretical}. We prefer to construct $K$ directly from Mercer's representations for several reasons: 1.~One can impose prior knowledge of the kernel of the HS operator on the eigenfunctions of $K$ (such as periodicity or smoothness), 2.~One can often generate samples from $\mathcal{GP}(0,K)$ efficiently using~\cref{eq:KLexpansion}, and 3.~One can control the decay rate of the eigenvalues of $K$, 

Hence, the quantity $\gamma_k$ in the probability bound of~\cref{th_tropp_random_svd_Frob} measures the quality of the covariance kernel $K$ in $\smash{\mathcal{GP}(0,K)}$ to generate random functions that can learn the HS operator $\mathcal{F}$. To minimize $1/\gamma_k$ we would like to select the eigenvalues $\lambda_1\geq \lambda_2\geq \cdots >0$ of $K$ so that they have the slowest possible decay rate while maintaining $\smash{\sum_{j=1}^\infty \lambda_j <\infty}$. One needs $\smash{\{\lambda_j\}\in \ell^1}$ to guarantee that $\smash{\omega \sim\mathcal{GP}(0,K)}$ has finite expected squared $L^2$-norm, i.e.,~$\smash{\E[\|\omega\|_{L^2(D)}^2]=\sum_{j=1}^\infty\lambda_j<\infty}$. The best sequence of eigenvalues we know that satisfies this property is called the Rissanen sequence~\citep{rissanen1983universal} and is given by $\smash{\lambda_j = R_j := 2^{-L(j)}}$, where  
\[
L(j) = \log_2(c_0)+\log_2^*(j), \quad \log_2^*(j) = \sum_{i=2}^\infty \max(\log_2^{(i)}(j),0), \quad c_0 = \sum_{i=2}^\infty 2^{-\log_2^*(i)},
\]
and $\log_2^{(i)}(j)=\log_2\circ\cdots\circ\log_2(j)$ is the composition of $\log_2(\cdot)$ $i$ times. Other options for the choice of eigenvalues include any sequence of the form $\lambda_j  = j^{-\nu}$ for $\nu>1$.

\subsection{Jacobi covariance kernel} \label{sec_Jacobi_kernel}

If $D=[-1,1]$, then a natural choice of orthonormal basis of $L^2(D)$ to define the Mercer's representation of the kernel are weighted Jacobi polynomials~\citep{deheuvels2008karhunen,olver2010nist}. That is, for a weight function $w_{\alpha,\beta}(x) = (1-x)^\alpha(1+x)^\beta$ with $\alpha,\beta>-1$, and any positive eigenvalue sequence $\{\lambda_j\}$, we consider the Jacobi kernel
\begin{equation} \label{eq:JacobiKernel}
K_{\text{Jac}}^{(\alpha,\beta)}(x,y) = \sum_{j=0}^\infty \lambda_{j+1}w_{\alpha,\beta}^{1/2}(x)\tilde{P}^{(\alpha,\beta)}_j(x)w_{\alpha,\beta}^{1/2}(y)\tilde{P}^{(\alpha,\beta)}_j(y), \quad x,y\in[-1,1],
\end{equation} 
where $\smash{\tilde{P}^{(\alpha,\beta)}_j}$ is the scaled Jacobi polynomial of degree $j$. The polynomials are normalized such that $\smash{\|w_{\alpha,\beta}^{1/2}\tilde{P}^{(\alpha,\beta)}_j\|_{L^2([-1,1])}=1}$ and $K_{\text{Jac}}^{(\alpha,\beta)}\in L^2([-1,1]^2)$. In this case, a random function can be sampled as
\[u(x) = \sum_{j=0}^\infty \sqrt{\lambda_{j+1}} c_j w_{\alpha,\beta}^{1/2} \tilde{P}_j^{(\alpha,\beta)}(x), \quad x\in [-1,1],\]
where $c_j\sim \mathcal{N}(0,1)$ for $0\leq j\leq \infty$.

A desirable property of a covariance kernel is to be unbiased towards one spatial direction, i.e., $K(x,y)=K(-y,-x)$ for $x,y\in [-1,1]$, which motivates us to always select $\alpha=\beta$. Moreover, it is desirable to have the eigenfunctions of $\smash{K_{\text{Jac}}^{(\alpha,\beta)}}$ to be polynomial so that one can generate samples from $\mathcal{GP}(0,K)$ efficiently. This leads us to choose $\alpha$ and $\beta$ to be even integers. The choice of $\alpha=\beta=0$ gives the Legendre kernel~\citep{foster2020optimal,habermann2019semicircle}. In~\cref{HS_random_SVD}, we use~\cref{eq:JacobiKernel} with $\alpha = \beta = 2$ (see~\cref{fig_examples_legendre_GP}) to ensure that functions sampled from the associated GP satisfy homogeneous Dirichlet boundary conditions. In this case, one must have $\sum_{j=1}^\infty j\lambda_{j} < \infty$ so that the series of functions in \cref{eq:JacobiKernel} converges uniformly and $\smash{K_{\text{Jac}}^{(2,2)}}$ is a continuous kernel (see \cref{sec_jacobi_kernel}). Under this additional constraint, the best choice of eigenvalues is given by a scaled Rissanen sequence: $\lambda_j = R_j/j$, for $j\geq 1$ (cf.~\cref{sec_choice_lambda}). Covariance kernels on higher dimensional domains of the form $D=[-1,1]^d$, for $d\geq 2$, can be defined using tensor products of weighted Jacobi polynomials.

\subsection{Smoothness of functions sampled from a GP with Jacobi kernel} \label{sec_smooth}

We now connect the decay rate of the eigenvalues of the Jacobi covariance kernel $K_{\text{Jac}}^{(2,2)}$ to the smoothness of the samples from $\mathcal{GP}(0,K_{\text{Jac}}^{(2,2)})$. Hence, the Jacobi covariance function allows the control of the decay rate of the eigenvalues $\{\lambda_j\}$ as well as the smoothness of the resulting randomly generated functions. First, \cref{th_regularity_series} asserts that if the coefficients of an infinite polynomial series have sufficient decay, then the resulting series is smooth with regularity depending on the decay rate. This result can be seen as a reciprocal to~\cite[Thm.~7.1]{trefethen2019approximation}.

\begin{lemma} \label{th_regularity_series}
Let $\{p_j\}$ be a family of polynomials such that $\textup{deg}(p_j)\leq j$ and $\max_{x\in[-1,1]} |p_j(x)| = 1$. If $f_n(x)=\sum_{j=0}^n a_j p_j(x)$ with $|a_j|\leq j^{-\nu}$ for $\nu>1$, then $f_n$ converges uniformly to $f(x)=\sum_{j=0}^\infty a_j p_j(x)$ and $f$ is $\mu$ times continuously differentiable for any integer $\mu$ such that $\mu<(\nu-1)/2$. 
\end{lemma}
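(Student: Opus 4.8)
The plan is to reduce the statement to two classical ingredients: a Markov-type bound on the derivatives of the polynomials $p_j$, and the standard theorem on term-by-term differentiation of uniformly convergent series. Fix an integer $\mu$ with $\mu<(\nu-1)/2$. The starting point is the Markov brothers' inequality: if $p$ has degree at most $j$ and $\max_{x\in[-1,1]}|p(x)|\le1$, then for every integer $k\ge0$,
\[
\max_{x\in[-1,1]}\bigl|p^{(k)}(x)\bigr|\;\le\;\prod_{i=0}^{k-1}\frac{j^2-i^2}{2i+1}\;\le\;j^{2k}.
\]
Applied to $p=p_j$, this yields $\max_{x\in[-1,1]}|p_j^{(k)}(x)|\le j^{2k}$ for all $j\ge1$ and $0\le k\le\mu$, and hence, using $|a_j|\le j^{-\nu}$, the bound $\max_{x\in[-1,1]}|a_j\, p_j^{(k)}(x)|\le j^{\,2k-\nu}$. (The term $j=0$ contributes a constant function $a_0p_0$, which is smooth and irrelevant to the regularity of $f$, so I treat it separately and take $|a_j|\le j^{-\nu}$ to mean $j\ge1$.)

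Next, since $k\le\mu$ and $2\mu-\nu<-1$, we have $2k-\nu\le2\mu-\nu<-1$, so $\sum_{j\ge1}j^{2k-\nu}<\infty$ for each $0\le k\le\mu$. By the Weierstrass $M$-test, the series $\sum_{j=0}^\infty a_j p_j^{(k)}$ converges uniformly on $[-1,1]$ for every such $k$; write $g_k$ for its sum, which is continuous as a uniform limit of polynomials. In particular $f_n\to g_0=:f$ uniformly, which gives the first assertion. To conclude that $f\in C^\mu([-1,1])$, let $S_n=\sum_{j=0}^n a_j p_j$ denote the partial sums, so that $S_n^{(k)}\to g_k$ uniformly for $0\le k\le\mu$; applying the classical theorem on differentiating sequences of functions (if $S_n^{(k)}$ converges at a point and $(S_n^{(k)})'=S_n^{(k+1)}$ converges uniformly, then $g_k$ is differentiable with $g_k'=g_{k+1}$) successively for $k=0,\dots,\mu-1$ shows that $f^{(k)}=g_k$ for all $0\le k\le\mu$, and each $g_k$ is continuous, so $f$ is $\mu$ times continuously differentiable.

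The one genuinely nonroutine step is the derivative estimate $\max_{x\in[-1,1]}|p_j^{(k)}(x)|\le j^{2k}$: the hypotheses provide no direct control over $p_j^{(k)}$, and without a bound growing no faster than a fixed power $j^{2\mu}$ the coefficient decay $|a_j|\le j^{-\nu}$ would not suffice to differentiate the series even once. This is exactly the Markov brothers' inequality, which I would cite rather than reprove; the remaining arguments — the $M$-test and iterated term-by-term differentiation — are standard real analysis.
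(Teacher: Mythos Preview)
Your proof is correct and follows essentially the same route as the paper: both invoke the Markov brothers' inequality to obtain $\max_{[-1,1]}|p_j^{(k)}|\le j^{2k}$, combine this with $|a_j|\le j^{-\nu}$ to get a summable majorant $j^{2k-\nu}$ when $2\mu-\nu<-1$, and then appeal to the standard theorem on term-by-term differentiation (the paper cites it as \cite[Thm.~7.17]{rudin1976principles}). Your write-up is more explicit about the iterated use of that theorem, but the argument is the same.
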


Note that the main application of this lemma occurs when $\textup{deg}(p_j)=j$ for all $j\geq 0$. The proof of \cref{th_regularity_series} is deferred to the supplementary material. We now state the following theorem about the regularity of functions sampled from $\smash{\mathcal{GP}(0,K_{\text{Jac}}^{(2,2)})}$, which guarantees that if the eigenvalues are chosen such that $\lambda_j=\mathcal{O}(1/j^{\nu})$ with $\nu>3$, then $\smash{f\sim \mathcal{GP}(0,K_{\text{Jac}}^{(2,2)})}$ is almost surely continuous. Moreover, a faster decay of the eigenvalues of $\smash{K_{\text{Jac}}^{(2,2)}}$ implies higher regularity of the sampled functions, in an almost sure sense.

\begin{theorem} \label{th_regularity_GP}
Let $\{\lambda_j\}\in \ell^1(\R^+)$ be a positive sequence such that $\lambda_j = \mathcal{O}(j^{-\nu})$ for $\nu>3$. If $f$ is sampled from $\mathcal{GP}(0,K_{\text{Jac}}^{(2,2)})$, then $f\in \mathcal{C}^\mu([-1,1])$ almost surely for any integer $\mu < (\nu-3)/2$.
\end{theorem}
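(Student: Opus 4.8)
The plan is to realise a sample $f\sim\mathcal{GP}(0,K_{\text{Jac}}^{(2,2)})$ as a random polynomial series to which \cref{th_regularity_series} applies, and then to check the coefficient-decay hypothesis of that lemma almost surely. Starting from the Mercer representation \cref{eq:JacobiKernel} with $\alpha=\beta=2$, write the Karhunen--Lo\`eve expansion
\[
f(x)=\sum_{j=0}^{\infty}\sqrt{\lambda_{j+1}}\,c_j\,\psi_j(x),\qquad \psi_j:=w_{2,2}^{1/2}\tilde P_j^{(2,2)},
\]
where the $c_j\sim\mathcal N(0,1)$ are independent and the series converges in $L^2$ of the sample space because $\{\lambda_j\}\in\ell^1$. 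The key structural point is that $w_{2,2}^{1/2}(x)=(1-x)(1+x)=1-x^2$ is itself a polynomial, so each $\psi_j$ is an honest polynomial of degree $j+2$ with $\|\psi_j\|_{L^2([-1,1])}=1$. Re-indexing by degree --- set $p_i:=\psi_{i-2}/\|\psi_{i-2}\|_{L^\infty}$ and $a_i:=\sqrt{\lambda_{i-1}}\,c_{i-2}\,\|\psi_{i-2}\|_{L^\infty}$ for $i\ge2$, with $a_0=a_1=0$ --- yields $f=\sum_{i\ge0}a_ip_i$ with $\deg p_i\le i$ and $\max_{[-1,1]}|p_i|=1$, exactly the shape required by \cref{th_regularity_series}. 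The whole argument then reduces to two estimates: a deterministic bound on the sup-norms $M_j:=\|\psi_j\|_{L^\infty([-1,1])}$ and a probabilistic bound on the Gaussian coefficients $c_j$.

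For the deterministic estimate I would show $M_j=\mathcal O(\sqrt{j+1})$, a classical fact about weighted Jacobi polynomials. On a fixed compact subinterval of $(-1,1)$ it follows from Szeg\H{o}'s asymptotics: after the $L^2(w_{2,2})$-renormalisation the polynomial $\tilde P_j^{(2,2)}(\cos\theta)$ has amplitude comparable to $(\sin\tfrac\theta2)^{-5/2}(\cos\tfrac\theta2)^{-5/2}$, and multiplying by $w_{2,2}^{1/2}(\cos\theta)=\sin^2\theta$ leaves a quantity of size $\mathcal O(\theta^{-1/2})$, i.e.\ $\mathcal O(\sqrt j)$ once $\theta\gtrsim1/j$; on the $\mathcal O(j^{-2})$ neighbourhoods of $\pm1$ one invokes the Mehler--Heine (Bessel) asymptotics, in which $\psi_j$ is $\asymp\sqrt j\,J_2(\text{rescaled variable})$ and hence again $\mathcal O(\sqrt j)$. (This is the very growth rate underlying the condition $\sum_j j\lambda_j<\infty$ for continuity of $K_{\text{Jac}}^{(2,2)}$ noted in \cref{sec_Jacobi_kernel}.) For the probabilistic estimate, the Gaussian tail bound $\P(|c_j|>t)\le e^{-t^2/2}$ together with Borel--Cantelli gives that almost surely $|c_j|\le\sqrt{3\log(j+2)}$ for all large $j$, so $|c_j|=\mathcal O(j^{\epsilon})$ almost surely for every $\epsilon>0$.

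Combining the two estimates with $\lambda_j=\mathcal O(j^{-\nu})$, the coefficients $a_i=\sqrt{\lambda_{i-1}}\,c_{i-2}\,M_{i-2}$ obey, almost surely and up to a harmless random multiplicative constant that one can divide out before invoking the lemma, a polynomial decay bound whose exponent is a fixed affine function of $\nu$; since $\nu>3$ this exponent exceeds $1$, so \cref{th_regularity_series} applies and, after tracking the exponents through it, delivers $f\in\mathcal C^\mu([-1,1])$ almost surely for every integer $\mu<(\nu-3)/2$, with faster eigenvalue decay giving more derivatives. One may equally bypass the lemma and run the Weierstrass $M$-test directly on the termwise $\mu$-th derivative $\sum_j\sqrt{\lambda_{j+1}}c_j\psi_j^{(\mu)}$, estimating the polynomial derivatives as in the proof of \cref{th_regularity_series}. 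In either route the main obstacle is the sharp sup-norm (and derivative) control of the eigenfunctions $w_{2,2}^{1/2}\tilde P_j^{(2,2)}$ near the endpoints $x=\pm1$, since that is precisely where they and their derivatives are largest; balancing this endpoint growth against the eigenvalue decay is what fixes the shift $\nu\mapsto\nu-3$ in the final regularity exponent.
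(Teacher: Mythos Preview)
Your proposal follows the same architecture as the paper's proof: expand $f$ in its Karhunen--Lo\`eve series, bound $\|(1-x^2)\tilde P_j^{(2,2)}\|_\infty$ by $\mathcal O(\sqrt j)$, control the Gaussian coefficients via Borel--Cantelli, and handle higher derivatives through the Markov-brothers argument of \cref{th_regularity_series}. Two implementation details differ. For the sup-norm bound you appeal to Szeg\H{o} and Mehler--Heine asymptotics; the paper instead gives an elementary self-contained estimate (\cref{prop_bound_ultra}) by rewriting $(1-x^2)C_j^{(5/2)}$ as a short combination of $C_j^{(3/2)}$ and $C_{j+2}^{(1/2)}$ via standard ultraspherical recurrences and bounding those at $x=1$---shorter and with an explicit constant. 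For the probabilistic step you apply Borel--Cantelli to the individual events $\{|c_j|>\sqrt{3\log(j+2)}\}$, whereas the paper applies it to the tail sums $X_n=\sum_{j>n}|c_j|\,j^{(1-\nu)/2}$ after a Chebyshev/second-moment bound; your version is a bit cleaner and feeds more directly into the lemma.

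One caution on the exponent bookkeeping. With $|a_i|\lesssim i^{-\nu/2}\cdot i^{1/2}\cdot i^{\epsilon}=i^{-(\nu-1)/2+\epsilon}$, invoking \cref{th_regularity_series} \emph{as stated} (whose proof uses the crude Markov bound $\|p_j^{(\mu)}\|_\infty\le j^{2\mu}$) yields only $\mu<(\nu-3)/4$, not the claimed $\mu<(\nu-3)/2$. Reaching $(\nu-3)/2$ requires a derivative estimate of order $j^{\mu+1/2}$ rather than $j^{2\mu+1/2}$ for the specific weighted polynomials $(1-x^2)\tilde P_j^{(2,2)}$; this is plausible since these polynomials vanish at $\pm1$ where Markov is extremal, but it is not supplied by the lemma. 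The paper's own proof is equally terse here (``follows the proof of \cref{th_regularity_series}''), so you are not doing worse than the paper, but your sentence ``after tracking the exponents through it, delivers $\mu<(\nu-3)/2$'' overstates what the lemma actually gives.
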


This theorem can be seen as a particular case of Driscoll's zero-one law~\citep{driscoll1973reproducing}, which characterizes the regularity of functions samples from GPs (see also~\citealp{kanagawa2018gaussian}).

\begin{figure}[htbp]
\vspace{0.5cm}
\begin{center}
\begin{overpic}[width= \textwidth]{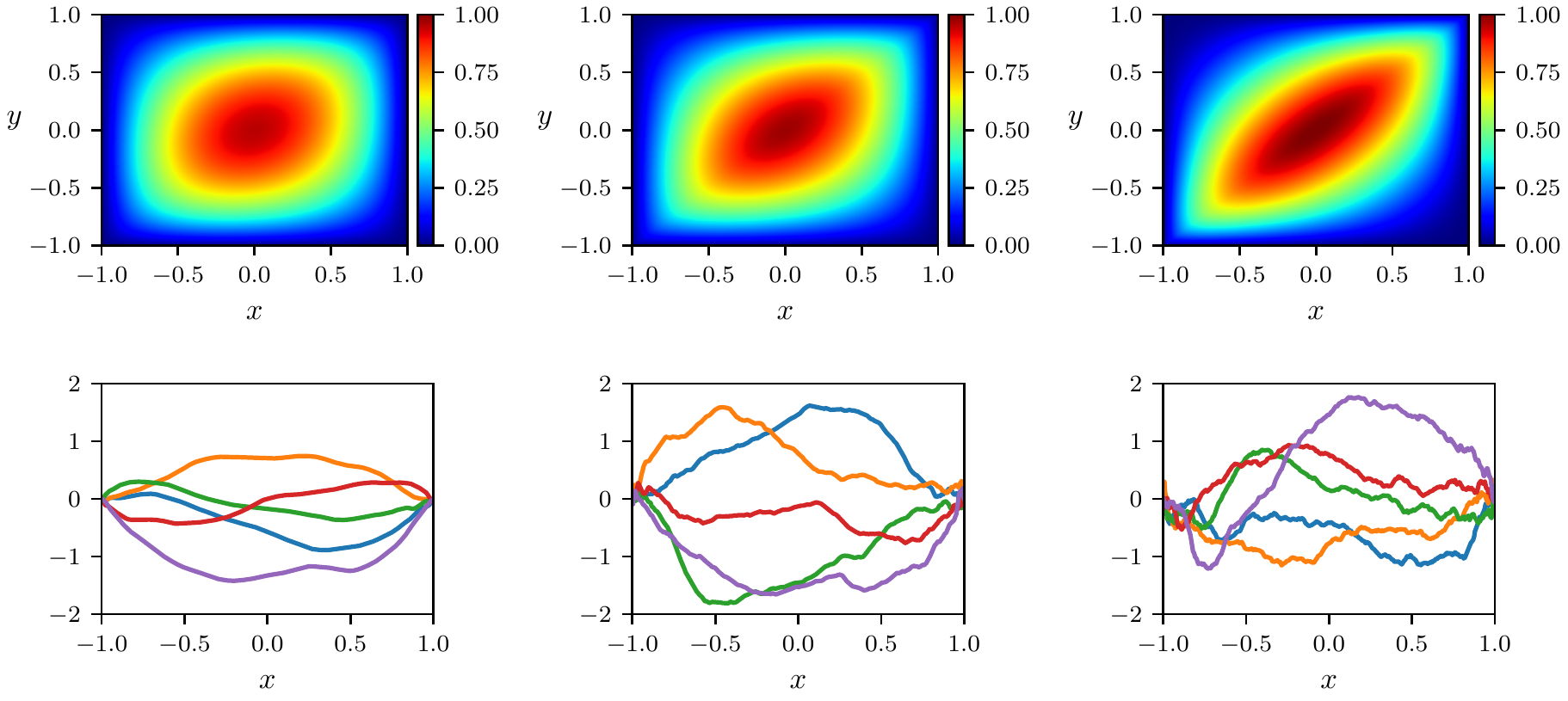}
\put(12,46){$\lambda_j=1/j^4$}
\put(46,46){$\lambda_j=1/j^3$}
\put(79,46){$\lambda_j=R_j/j$}
\end{overpic}
\end{center}
\caption{Covariance kernel $K_{\text{Jac}}^{(2,2)}$ constructed using Jacobi polynomials of type $(2,2)$ with $\lambda_j=1/j^4$, $1/j^3$, and $R_j/j$, where $R_j$ is the Rissanen sequence (top). The bottom panels illustrate functions sampled from $\smash{\mathcal{GP}(0, K_{\text{Jac}}^{(2,2)})}$ with the different eigenvalue sequences. The series for generating the random functions are truncated to $n=500$.}
\label{fig_examples_legendre_GP}
\end{figure}

In \cref{fig_examples_legendre_GP}, we display the Jacobi kernel of type $(2,2)$ with functions sampled from the corresponding GP. We selected eigenvalue sequences of different decay rates: from the faster $1/j^4$ to the slower Rissanen sequence $R_j/j$ (\cref{sec_choice_lambda}). For $\lambda_j=1/j^3$ and $\lambda_j=R_j/j$, we observe a large variation of the randomly generated functions near $x=\pm 1$, indicating a potential discontinuity of the samples at these two points as $n\to\infty$. This is in agreement with \cref{th_regularity_GP}, which only guarantees continuity (with probability one) of the randomly generated functions if $\lambda_j \sim 1/j^\nu$ with $\nu>3$. 

\section{Numerical experiments} \label{HS_random_SVD}

\subsection{Approximation of matrices using non-standard covariance functions} \label{sec_exp_matrix}

The approximation error bound in~\cref{th_tropp_random_svd_Frob} depends on the eigenvalues of the covariance matrix, which dictates the distribution of the column vectors of the input matrix $\mtx{\Omega}$. Roughly speaking, the more prior knowledge of the matrix $\mtx{A}$ that can be incorporated into the covariance matrix, the better. In this numerical example, we investigate whether the standard randomized SVD, which uses the identity as its covariance matrix, can be improved by using a different covariance matrix. We then attempt to learn the discretized $2000\times 2000$ matrix, i.e.,~the discrete Green's function, of the inverse of the following differential operator:
\[\mathcal{L} u = d^2u/dx^2-100\sin(5\pi x)u,\quad x\in[0,1].\]
We vary the number of columns (i.e. samples from the GP) in the input matrix $\mtx{\Omega}$ from $1$ to $2000$. 

\begin{figure}[htbp]
\begin{center}
\begin{overpic}[width=\textwidth]{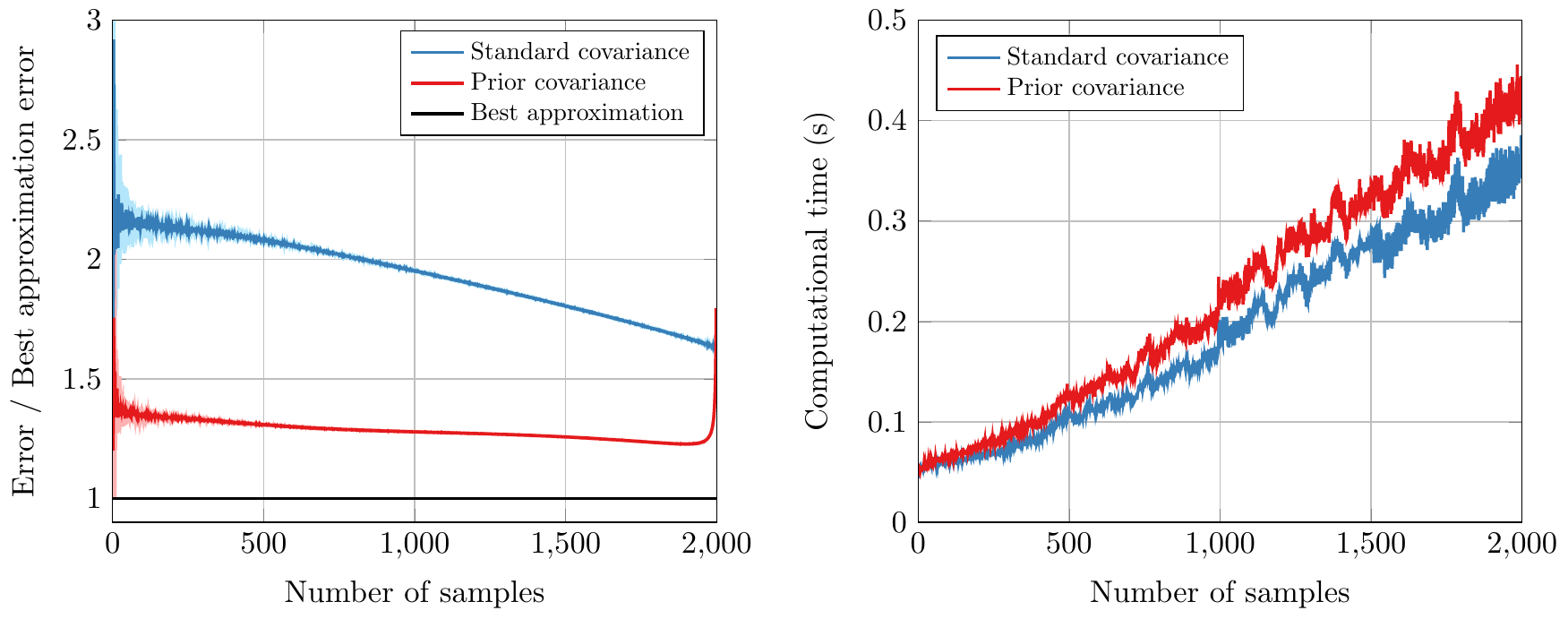}
\end{overpic}
\end{center}
\caption{Left: Ratio between the average randomized SVD approximation error (over 10 runs) of the $2000\times 2000$ matrix of the inverse of the differential operator $\mathcal{L} u = d^2u/dx^2-100\sin(5\pi x)u$ on $[0,1]$, and the best approximation error. The error bars in light colour (blue and red) illustrate one standard deviation. Right: Average computational time of the algorithm (over 10 runs). The eigenvalue decomposition of the covariance matrix has been precomputed before.}
\label{fig_SVD_matrix}
\end{figure}

In \cref{fig_SVD_matrix}(left), we compare the ratios between the relative error in the Frobenius norm given by the randomized SVD and the best approximation error, obtained by truncating the SVD of $\mtx{A}$. The prior covariance matrix $\mtx{K}$ consists of the discretized $2000\times 2000$ matrix of the Green's function of the negative Laplace operator $\mathcal{L}u = -d^2u/dx^2$ on $[0,1]$ to incorporate knowledge of the diffusion term in the matrix $\mtx{A}$. We see that a nonstandard covariance matrix leads to a higher approximation accuracy, with a reduction of the error by a factor of $1.3$-$1.6$ compared to the standard randomized SVD. At the same time, the procedure is only $20\%$ slower\footnote{Timings were performed on an Intel Xeon CPU E5-2667 v2 @ 3.30GHz using MATLAB R2020b without explicit parallelization.} on average (\cref{fig_SVD_matrix} (right)) as one can precompute the eigenvalue decomposition of the covariance matrix (see~\cref{ap_exp}). It is of interest to maximize the accuracy of the approximation matrix from a limited number of samples in applications where the sampling time is much higher than the numerical linear algebra costs.

\subsection{Applications of the randomized SVD for Hilbert--Schmidt operators}

We now apply the randomized SVD for HS operators to learn kernels of integral operators. In this first example, the kernel is defined as~\citep{townsend2013example} 
\[G(x,y) = \cos(10(x^2+y))\sin(10(x+y^2)), \quad x,y\in [-1,1],\]
and is displayed in \cref{fig_examples_SVD}(a). We employ the squared-exponential covariance kernel $K_{\text{SE}}$ with parameter $\ell=0.01$ and $k=100$ samples (see \cref{eq_SE_kernel}) to sample random functions from the associated GP. The learned kernel $G_k$ is represented on the bottom panel of \cref{fig_examples_SVD}(a) and has an approximation error around machine precision.

\begin{figure}[htbp]
\begin{center}
\vspace{0.3cm}
\hspace{0.6cm}
\begin{overpic}[width=0.95\textwidth,trim = 0 0 0 0, clip]{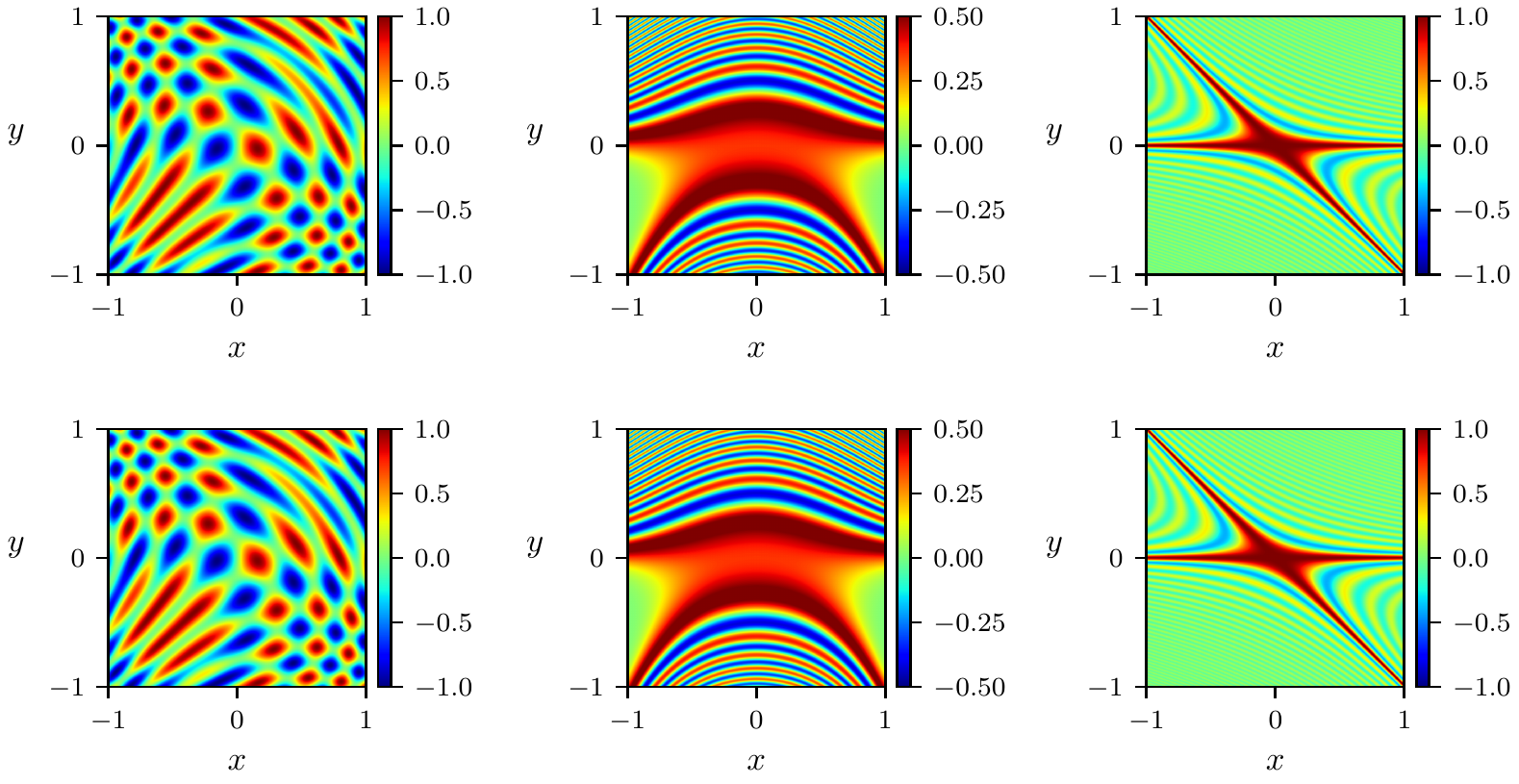}
\put(-1,50){(a)}
\put(33,50){(b)}
\put(67.5,50){(c)}
\put(-5,38.4){\rotatebox{90}{Kernel}}
\put(-5,6.3){\rotatebox{90}{Learned Kernel}}
\end{overpic}
\end{center}
\caption{Kernels of three HS operators (top) together with the kernels learned by the randomized SVD for HS operators (bottom), using the squared-exponential covariance kernel $K_{\text{SE}}$ with parameter $\ell=0.01$ and one hundred functions sampled from $\mathcal{GP}(0,K_{\text{SE}})$.}
\label{fig_examples_SVD}
\end{figure}

As a second application of the randomized SVD for HS operators, we learn the kernel $G(x,y) = \text{Ai}(-13(x^2y+y^2))$ for $x,y\in[-1,1]$, where $\text{Ai}$ is the Airy function~\cite[Chapt.~9]{NIST_DLMF} defined by
\[\text{Ai}(x) = \frac{1}{\pi}\int_0^\infty\cos\left(\frac{t^3}{3}+xt\right)\d t, \quad x\in\R.\]
We plot the kernel and its low-rank approximant given by the randomized SVD for HS operators in \cref{fig_examples_SVD}(b) and obtain an approximation error (measured in the $L^2$-norm) of $5.04\times 10^{-14}$. The two kernels have a numerical rank equal to $42$.

The last example consists of learning the HS operator associated with the kernel $G(x,y)=J_0(100(xy+y^2))$ for $x,y\in [-1,1]$, where $J_0$ is the Bessel function of the first kind~\cite[Chapt.~10]{NIST_DLMF} defined as
\[J_0(x) = \frac{1}{\pi}\int_0^{\pi}\cos(x\sin t)\d t,\quad x\in\R,\]
and plotted in \cref{fig_examples_SVD}(c). The rank of this kernel is equal to $91$ while its approximation is of rank $89$ and the approximation error is equal to $4.88\times 10^{-13}$. We observe that in the three numerical examples displayed in \cref{fig_examples_SVD}, the difference between the learned and the original kernel is visually not perceptible.

\begin{figure}[htbp]
\begin{center}
\begin{overpic}[width=\textwidth]{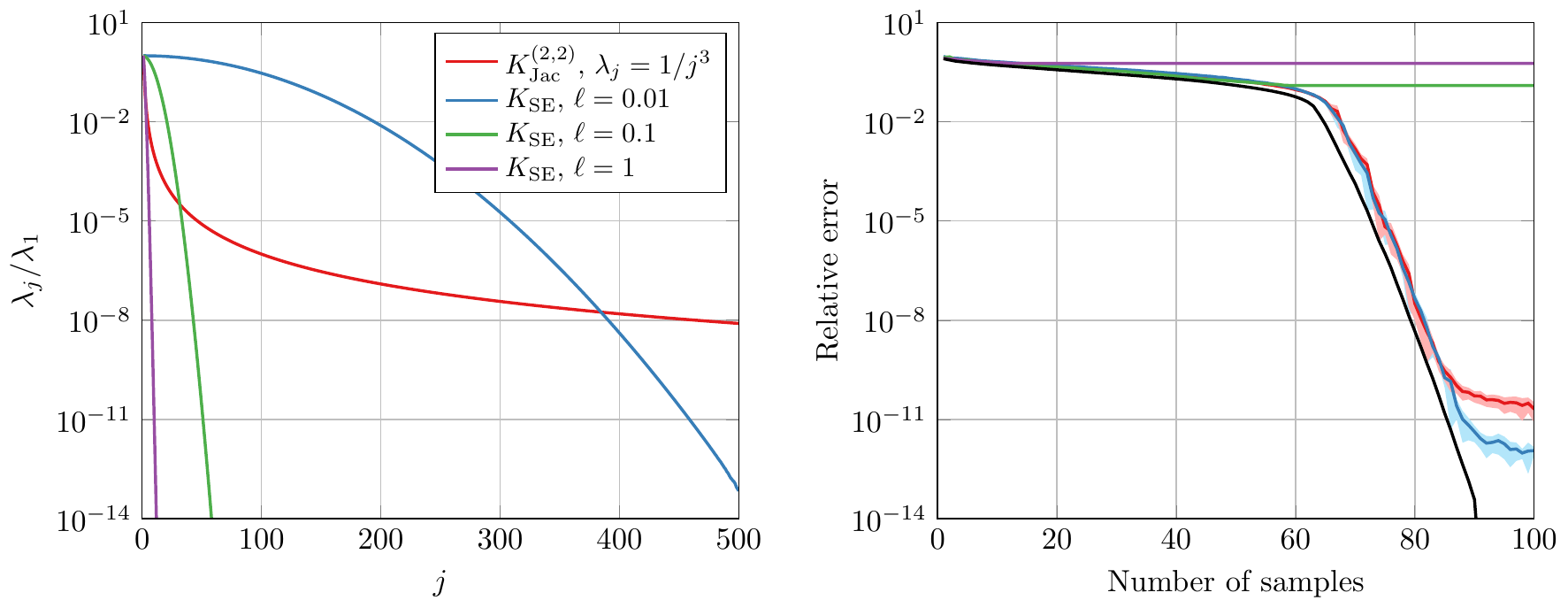}
\end{overpic}
\end{center}
\caption{Left: Scaled eigenvalues of the Jacobi covariance kernel $\smash{K_{\text{Jac}}^{(2,2)}}$ with sequence $\lambda_j=1/j^3$ and squared-exponential kernels $K_{\text{SE}}$ with parameters $\ell=0.01,0.1,1$, respectively. Right: Average (over 10 runs) relative approximation error in the $L^2$-norm between the Bessel kernel $G(x,y)=J_0(100(xy+y^2))$ and its low-rank approximation $G_k(x,y)$, obtained from the randomized SVD by sampling the GPs $k$ times. The error bars in light colour (blue and red) illustrate one standard deviation and the black line indicates the best approximation error given by the tail of the singular values of $G$.}
\label{fig_convergence_SVD}
\end{figure}

Finally, we evaluate the influence of the choice of covariance kernel and number of samples in \cref{fig_convergence_SVD}. Here, we vary the number of samples from $k=1$ to $k=100$ and use the randomized SVD for HS operators with four different covariance kernels: the squared-exponential $\smash{K_{\text{SE}}}$ with parameters $\ell = 0.01,0.1,1$, and the Jacobi kernel $\smash{K_{\text{Jac}}^{(2,2)}}$ with eigenvalues $\lambda_j = 1/j^3$, for $j\geq 1$. In the left panel of \cref{fig_convergence_SVD}, we represent the eigenvalue ratio $\lambda_j/\lambda_1$ of the four kernels and observe that this quantity falls below machine precision for the squared-exponential kernel with $\ell=1$ and $\ell=0.1$ at $j=13$ and $j=59$, respectively. In \cref{fig_convergence_SVD} (right), we observe that these two kernels fail to approximate kernels of high numerical rank. The other two kernels have a much slower decay of eigenvalues and can capture (or learn) more complicated kernels. We then see in the right panel of \cref{fig_convergence_SVD} that the relative approximation errors obtained using $\smash{K_{\text{Jac}}^{(2,2)}}$ and $\smash{K_{\text{SE}}}$ are close to the best approximation error given by the squared tail of the singular values of the integral kernel $G(x,y)$, i.e.,~$(\sum_{j\geq k+1}\sigma_j^2)^{1/2}$. The overshoot in the error at $k=100$ compared to the machine precision is due to the decay of the eigenvalues of the covariance kernels. Hence, spatial directions associated with small eigenvalues are harder to learn accurately. This issue does not arise in finite dimensions with the standard randomized SVD because the covariance kernel used there is isotropic, i.e.,~all its eigenvalues are equal to one. However, this choice is no longer possible for learning HS integral operators as the covariance kernel $K$ must be squared-integrable. The relative approximation errors at $k=100$ (averaged over 10 runs) using $\smash{K_{\text{Jac}}^{(2,2)}}$ and $K_{\text{SE}}$ with $\ell = 0.01$ are $\smash{\text{Error}(K_{\text{Jac}}^{(2,2)}) \approx 2.6\times 10^{-11}}$, and $\smash{\text{Error}(K_{\text{SE}})\approx 5.7\times 10^{-13}}$, which gives a ratio of
\begin{equation} \label{eq_ratio_rel}
\text{Error}(K_{\text{Jac}}^{(2,2)})/\text{Error}(K_{\text{SE}}) \approx 45.6.
\end{equation}
However, the square-root of the ratio of the quality of the two kernels for $k=91$ is equal to 
\begin{equation} \label{eq_ratio_gamma}
\sqrt{\gamma_{91}(K_{\text{SE}})/\gamma_{91}(K_{\text{Jac}}^{(2,2)})}\approx 117.8,
\end{equation}
which is of the same order of magnitude of \cref{eq_ratio_rel} as predicted by~\cref{th_tropp_random_svd_Frob}. In \cref{eq_ratio_gamma}, $\smash{\gamma_{91}(K_{\text{SE}})\approx 5.88\times 10^{-2}}$ and $\smash{\gamma_{91}(K_{\text{Jac}}^{(2,2)})\approx 4.24\times 10^{-6}}$ are both computed using Chebfun.

In conclusion, this section provides numerical insights to motivate the choice of the covariance kernel to learn HS operators. Following \cref{fig_convergence_SVD}, a kernel with slowly decaying eigenvalues is preferable and yields better approximation errors or higher learning rate with respect to the number of samples, especially when learning a kernel with a large numerical rank. The optimal choice from a theoretical viewpoint is to select a covariance kernel whose eigenvalues have a decay rate similar to the Rissanen sequence~\citep{rissanen1983universal}, but other choices may be preferable in practice to ensure smoothness of the sample functions (cf.~\cref{sec_smooth}).

\section{Conclusions}

We have explored practical extensions of the randomized SVD to nonstandard covariance matrices and Hilbert--Schmidt operators. This paper motivates new computational and algorithmic approaches for preconditioning the covariance kernel based on prior information to compute a low-rank approximation of matrices and impose properties on the learned matrix and random functions from the GP. Numerical experiments demonstrate that covariance matrices with prior knowledge outperform the standard identity matrix used in the literature and lead to near-optimal approximation errors. In addition, we proposed a covariance kernel based on weighted Jacobi polynomials, which allows the control of the smoothness of the random functions generated and may find practical applications in PDE learning~\citep{boulle2020rational,boulle2021datadriven} as it imposes prior knowledge of Dirichlet boundary conditions. The algorithm presented in this paper is limited to matrices and HS operators and does not extend to unbounded operators such as differential operators. Additionally, the theoretical bounds only offer probabilistic guarantees for Gaussian inputs, while sub-Gaussian distributions~\citep{kahane1960proprietes} of the inputs would be closer to realistic application settings.

\subsubsection*{Acknowledgments}
We thank Joel Tropp for his suggestions leading to \cref{prop_control_matrices}. This work was supported by the EPSRC Centre For Doctoral Training in Industrially Focused Mathematical Modelling (EP/L015803/1) in collaboration with Simula Research Laboratory, and the National Science Foundation grants DMS-1818757, DMS-1952757, and DMS-2045646.

\bibliography{references}
\bibliographystyle{iclr2022_conference}

\appendix

\section{Randomized SVD with multivariate Gaussian inputs}

Let $m\geq n\geq 1$ and $\mtx{A}$ be an $m\times n$ real matrix with singular value decomposition $\mtx{A}=\mtx{U}\mtx{\Sigma}\mtx{V}^*$, where $\mtx{U}$ and $\mtx{V}$ are orthonormal matrices, and $\mtx{\Sigma}$ be an $m\times n$ diagonal matrix with entries $\sigma_1(\mtx{A})\geq \cdots\geq \sigma_n(\mtx{A})>0$. For a fixed target rank $k\geq 1$, we define $\mtx{\Sigma}_1$ and $\mtx{\Sigma}_2$ to be the diagonal matrices, which respectively contain the first $k$ singular values of $\mtx{A}$: $\sigma_1(\mtx{A})\geq \cdots \geq \sigma_k(\mtx{A})$, and the remaining singular values. Let $\mtx{V}_1$ be the $n\times k$ matrix obtained by truncating $\mtx{V}_1$ after $k$ columns and $\mtx{V}_2$ the remainder. In this section, $\mtx{K}$ denotes a symmetric positive semi-definite $n\times n$ matrix and $\mtx{\Omega}\in \R^{n\times \ell}$ a Gaussian random matrix with $\ell\geq k$ independent columns sampled from a multivariate normal distribution with covariance matrix $\mtx{K}$. Finally, we define $\mtx{\Omega}_1\coloneqq \mtx{V}_1^*\mtx{\Omega}$ and $\mtx{\Omega}_2\coloneqq \mtx{V}_2^*\mtx{\Omega}$. We first refine~\cite[Lem.~5]{boulle2021learning}.

\begin{lemma}\label{lem_1}
Let $\ell\geq 1$, $\mtx{\Omega}\in\mathbb{R}^{n\times \ell}$ be a Gaussian random matrix, where each column is sampled from a multivariate Gaussian distribution with covariance matrix $\mtx{K}$, and $\mtx{T}$ be an $\ell\times k$ matrix. Then,
\begin{equation}
\E[\|\mtx{\Sigma}_2\mtx{V}^*_2\mtx{\Omega}\mtx{T}\|_{\textup{F}}^2]= \Tr(\mtx{\Sigma}_2^2\mtx{V}_2^*\mtx{K}\mtx{V}_2)\|\mtx{T}\|_{\textup{F}}^2.
\end{equation}
\end{lemma}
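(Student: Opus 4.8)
The plan is to expand the squared Frobenius norm as a trace, push the expectation inside using linearity, and reduce everything to the elementary identity $\E[\omega^*\mtx{B}\omega]=\Tr(\mtx{B}\mtx{K})$ for a centered Gaussian vector $\omega\sim\mathcal{N}(0,\mtx{K})$. Set $\mtx{B}\coloneqq(\mtx{\Sigma}_2\mtx{V}_2^*)^*(\mtx{\Sigma}_2\mtx{V}_2^*)=\mtx{V}_2\mtx{\Sigma}_2^2\mtx{V}_2^*$, so that
\[\|\mtx{\Sigma}_2\mtx{V}_2^*\mtx{\Omega}\mtx{T}\|_{\textup{F}}^2=\Tr\!\big(\mtx{T}^*\mtx{\Omega}^*\mtx{B}\mtx{\Omega}\mtx{T}\big),\]
and therefore, by linearity of the trace and of the expectation, $\E[\|\mtx{\Sigma}_2\mtx{V}_2^*\mtx{\Omega}\mtx{T}\|_{\textup{F}}^2]=\Tr\!\big(\mtx{T}^*\,\E[\mtx{\Omega}^*\mtx{B}\mtx{\Omega}]\,\mtx{T}\big)$.

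Next I would compute the $\ell\times\ell$ matrix $\E[\mtx{\Omega}^*\mtx{B}\mtx{\Omega}]$ entrywise. Writing $\omega_1,\dots,\omega_\ell$ for the columns of $\mtx{\Omega}$, its $(i,i')$ entry is $\E[\omega_i^*\mtx{B}\omega_{i'}]$. Since the $\omega_i$ are independent with zero mean, this vanishes whenever $i\neq i'$; for $i=i'$, writing $\omega_i^*\mtx{B}\omega_i=\Tr(\mtx{B}\,\omega_i\omega_i^*)$ and taking expectations gives $\E[\omega_i^*\mtx{B}\omega_i]=\Tr\!\big(\mtx{B}\,\E[\omega_i\omega_i^*]\big)=\Tr(\mtx{B}\mtx{K})$. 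Hence $\E[\mtx{\Omega}^*\mtx{B}\mtx{\Omega}]=\Tr(\mtx{B}\mtx{K})\,\mtx{I}_\ell$, and consequently
\[\E[\|\mtx{\Sigma}_2\mtx{V}_2^*\mtx{\Omega}\mtx{T}\|_{\textup{F}}^2]=\Tr(\mtx{B}\mtx{K})\,\Tr(\mtx{T}^*\mtx{T})=\Tr(\mtx{B}\mtx{K})\,\|\mtx{T}\|_{\textup{F}}^2.\]

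It then remains to rewrite $\Tr(\mtx{B}\mtx{K})$ in the stated form: by cyclicity of the trace, $\Tr(\mtx{B}\mtx{K})=\Tr(\mtx{V}_2\mtx{\Sigma}_2^2\mtx{V}_2^*\mtx{K})=\Tr(\mtx{\Sigma}_2^2\mtx{V}_2^*\mtx{K}\mtx{V}_2)$, which finishes the argument. I do not anticipate a genuine obstacle here: this is a routine second-moment calculation, and the only two points needing a word of justification are the vanishing of the off-diagonal terms (independence and zero mean of the columns of $\mtx{\Omega}$) and the Gaussian quadratic-form identity $\E[\omega^*\mtx{B}\omega]=\Tr(\mtx{B}\mtx{K})$. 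The same computation carries over verbatim to the Hilbert--Schmidt setting, with $\mtx{K}$ replaced by the covariance operator and finite traces by the corresponding convergent series; compared with \cite[Lem.~5]{boulle2021learning}, the refinement is simply that we retain the exact value $\Tr(\mtx{\Sigma}_2^2\mtx{V}_2^*\mtx{K}\mtx{V}_2)$ instead of bounding it.
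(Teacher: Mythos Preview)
Your argument is correct. It differs in presentation from the paper's proof, which instead writes $\mtx{K}=\mtx{Q}_\mtx{K}\mtx{\Lambda}\mtx{Q}_\mtx{K}^*$, factors $\mtx{\Omega}=\mtx{Q}_\mtx{K}\mtx{\Lambda}^{1/2}\mtx{G}$ with $\mtx{G}$ a standard Gaussian matrix, sets $\mtx{S}=\mtx{\Sigma}_2\mtx{V}_2^*\mtx{Q}_\mtx{K}\mtx{\Lambda}^{1/2}$, and then invokes the known identity $\E\|\mtx{S}\mtx{G}\mtx{T}\|_\Frob^2=\|\mtx{S}\|_\Frob^2\|\mtx{T}\|_\Frob^2$ from \cite[Prop.~A.1]{halko2011finding}; the conclusion follows after checking $\|\mtx{S}\|_\Frob^2=\Tr(\mtx{\Sigma}_2^2\mtx{V}_2^*\mtx{K}\mtx{V}_2)$ by cyclicity. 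Your route avoids the eigendecomposition and the external citation by computing $\E[\mtx{\Omega}^*\mtx{B}\mtx{\Omega}]$ directly from the quadratic-form identity and column independence, which makes the proof self-contained and slightly more elementary; the paper's route has the complementary advantage of reducing immediately to a standard-Gaussian result already in the literature. Both are equally rigorous and lead to the same equality with no extra hypotheses.
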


\begin{proof}
Let $\mtx{K}=\mtx{Q}_\mtx{K}\mtx{\Lambda}\mtx{Q}_\mtx{K}^*$ be the eigenvalue decomposition of $\mtx{K}$, where $\mtx{Q}_\mtx{K}$ is orthonormal and $\mtx{\Lambda}$ is a diagonal matrix containing the eigenvalues of $\mtx{K}$ in decreasing order: $\lambda_1\geq \cdots\geq \lambda_n\geq 0$. We note that $\mtx{\Omega}$ can be expressed as $\mtx{\Omega} = \mtx{Q}_\mtx{K}\mtx{\Lambda}^{1/2}\mtx{G}$, where $\mtx{G}$ is a standard Gaussian matrix. Let $\mtx{S}=\mtx{\Sigma}_2 \mtx{V}_2^* \mtx{Q}_\mtx{K}\mtx{\Lambda}^{1/2}$, the proof follows from~\cite[Prop.~A.1]{halko2011finding}, which shows that $\E\|\mtx{S}\mtx{G}\mtx{T}\|_\Frob^2=\|\mtx{S}\|_\Frob^2\|\mtx{T}\|_\Frob^2$.
\end{proof}

Note that one can bound the term $\Tr(\mtx{\Sigma}_2^2\mtx{V}_2^*\mtx{K}\mtx{V}_2)$ by $\lambda_1\|\mtx{\Sigma}_2\|_{\textup{F}}^2$, where $\lambda_1$ is the largest eigenvalue of $\mtx{K}$~\citep{boulle2021learning}. While this provides a simple upper bound, it does not demonstrate that the use of a covariance matrix containing prior information on the singular vectors of $\mtx{A}$ can outperform the randomized SVD with standard Gaussian inputs. Then, combining the proof of \cite[Thm.~1]{boulle2021learning} and \cref{lem_1}, we prove the following proposition, which bounds the expected approximation error of the randomized SVD with multivariate Gaussian inputs.

\begin{proposition}\label{prop_exp_random_svd}
Let $\mtx{A}$ be an $m\times n$ matrix, $k\geq 1$ an integer, and choose an oversampling parameter $p\geq 2$. If $\mtx{\Omega}\in\mathbb{R}^{n\times (k+p)}$ is a Gaussian random matrix, where each column is sampled from a multivariate Gaussian distribution with covariance matrix $\mtx{K}\in\mathbb{R}^{n\times n}$, and $\mtx{Q}\mtx{R} = \mtx{A}\mtx{\Omega}$ is the economized QR decomposition of $\mtx{A}\mtx{\Omega}$, then,
\[\E\left[\|\mtx{A} - \mtx{Q}\mtx{Q}^*\mtx{A}\|_{\Frob}\right]\leq \left(1+\sqrt{\frac{\beta_k}{\gamma_k}\frac{k(k+p)}{p-1}}\,\right)\sqrt{\sum_{j=k+1}^n\sigma^2_j(\mtx{A})},\]
where $\gamma_k = k/(\lambda_1 \Tr((\mtx{V}_1^*\mtx{K}\mtx{V}_1)^{-1})))$ and $\beta_k = \Tr(\mtx{\Sigma}_2^2\mtx{V}_2^*\mtx{K}\mtx{V}_2)/(\lambda_1\|\mtx{\Sigma}_2\|_\Frob^2)$.
\end{proposition}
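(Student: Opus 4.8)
The plan is to mirror the expectation analysis of the classical randomized SVD (\cite{halko2011finding}), replacing the two places where that argument uses properties of standard Gaussian matrices by the covariance-aware second moment of \cref{lem_1} and an inverse-Wishart computation. First, I would invoke the deterministic range-finder bound. Writing $\mtx{\Omega}_1=\mtx{V}_1^*\mtx{\Omega}$ and $\mtx{\Omega}_2=\mtx{V}_2^*\mtx{\Omega}$, and noting that $\mtx{\Omega}_1\in\R^{k\times(k+p)}$ has full row rank almost surely (it has the distribution of $(\mtx{V}_1^*\mtx{K}\mtx{V}_1)^{1/2}\mtx{G}_1$ for a standard Gaussian $\mtx{G}_1$, using that $\mtx{V}_1^*\mtx{K}\mtx{V}_1\succ 0$, which is implicit in the definition of $\gamma_k$), \cite[Thm.~9.1]{halko2011finding} gives $\|\mtx{A}-\mtx{Q}\mtx{Q}^*\mtx{A}\|_\Frob^2\le\|\mtx{\Sigma}_2\|_\Frob^2+\|\mtx{\Sigma}_2\mtx{\Omega}_2\mtx{\Omega}_1^\dagger\|_\Frob^2$. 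Taking square roots (via $\sqrt{a+b}\le\sqrt a+\sqrt b$) and then expectations, the claim reduces to showing $\E\|\mtx{\Sigma}_2\mtx{\Omega}_2\mtx{\Omega}_1^\dagger\|_\Frob\le\big((\beta_k/\gamma_k)\,k(k+p)/(p-1)\big)^{1/2}\|\mtx{\Sigma}_2\|_\Frob$.

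The main obstacle is that, unlike the classical case $\mtx{K}=\mtx{I}$, the Gaussian matrices $\mtx{\Omega}_1$ and $\mtx{\Omega}_2$ are in general correlated (their cross-covariance $\mtx{V}_1^*\mtx{K}\mtx{V}_2$ need not vanish), so one cannot condition on $\mtx{\Omega}_1$ and exploit independence of $\mtx{\Omega}_2$, which is the key step in the sharp standard analysis. I would get around this with a cruder decoupling: bound sub-multiplicatively $\|\mtx{\Sigma}_2\mtx{\Omega}_2\mtx{\Omega}_1^\dagger\|_\Frob\le\|\mtx{\Sigma}_2\mtx{\Omega}_2\|_\Frob\,\|\mtx{\Omega}_1^\dagger\|_\Frob$ and then apply Cauchy--Schwarz in expectation, $\E\|\mtx{\Sigma}_2\mtx{\Omega}_2\mtx{\Omega}_1^\dagger\|_\Frob\le(\E\|\mtx{\Sigma}_2\mtx{\Omega}_2\|_\Frob^2)^{1/2}\,(\E\|\mtx{\Omega}_1^\dagger\|_\Frob^2)^{1/2}$. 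The price for this is precisely the extra factor $k+p$ that is absent from the sharper $\mtx{K}=\mtx{I}$ result; I expect everything after this point to be routine bookkeeping.

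It remains to evaluate the two moments. For the first, the second-moment computation underlying \cref{lem_1} (equivalently, since the $k+p$ columns of $\mtx{\Omega}_2$ are i.i.d.\ $\mathcal{N}(0,\mtx{V}_2^*\mtx{K}\mtx{V}_2)$) gives $\E\|\mtx{\Sigma}_2\mtx{\Omega}_2\|_\Frob^2=(k+p)\Tr(\mtx{\Sigma}_2^2\mtx{V}_2^*\mtx{K}\mtx{V}_2)=(k+p)\,\beta_k\lambda_1\|\mtx{\Sigma}_2\|_\Frob^2$. For the second, writing $\mtx{C}_1=\mtx{V}_1^*\mtx{K}\mtx{V}_1$ and $\mtx{\Omega}_1=\mtx{C}_1^{1/2}\mtx{G}_1$ (in distribution) yields $\mtx{\Omega}_1^\dagger=\mtx{G}_1^\dagger\mtx{C}_1^{-1/2}$ and hence $\|\mtx{\Omega}_1^\dagger\|_\Frob^2=\Tr\!\big(\mtx{C}_1^{-1}(\mtx{G}_1\mtx{G}_1^*)^{-1}\big)$; since $\mtx{G}_1\mtx{G}_1^*$ is a $k\times k$ Wishart matrix with $k+p$ degrees of freedom, $\E[(\mtx{G}_1\mtx{G}_1^*)^{-1}]=(p-1)^{-1}\mtx{I}_k$ — this is exactly where $p\ge 2$ is used, as in the proof of \cite[Thm.~1]{boulle2021learning} — so $\E\|\mtx{\Omega}_1^\dagger\|_\Frob^2=(p-1)^{-1}\Tr((\mtx{V}_1^*\mtx{K}\mtx{V}_1)^{-1})=k/((p-1)\lambda_1\gamma_k)$. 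Multiplying the two moments the $\lambda_1$'s cancel, so the square root equals $\big(k(k+p)\beta_k/((p-1)\gamma_k)\big)^{1/2}\|\mtx{\Sigma}_2\|_\Frob$; combined with the first step and $\|\mtx{\Sigma}_2\|_\Frob=(\sum_{j=k+1}^n\sigma_j^2(\mtx{A}))^{1/2}$ this yields the stated inequality.
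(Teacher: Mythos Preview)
Your proposal is correct and follows essentially the same route as the paper: the paper's one-line proof ``combining the proof of \cite[Thm.~1]{boulle2021learning} and \cref{lem_1}'' amounts precisely to your scheme of taking the deterministic range-finder bound, decoupling $\mtx{\Omega}_1$ and $\mtx{\Omega}_2$ via submultiplicativity and Cauchy--Schwarz (whence the extra $k+p$ factor, as the paper's remark after the proposition confirms), and then evaluating the two second moments using \cref{lem_1} and the inverse-Wishart expectation respectively. Your write-up simply unpacks in detail what the paper leaves implicit in its citations.
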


We remark that for standard Gaussian inputs, we have $\gamma_k=\beta_k=1$ in \cref{prop_exp_random_svd}, and we recover the average Frobenius error of the randomized SVD~\cite[Thm.~10.5]{halko2011finding} up to a factor of $(k+p)$ due to the non-independence of $\mtx{\Omega}_1$ and $\mtx{\Omega}_2$ in general.

\begin{lemma} \label{prop_control_matrices}
With the notations introduced at the beginning of the section, for all $s\geq 0$, we have
\[\mathbb{P}\left\{\|\mtx{\Sigma}_2\mtx{\Omega}_2\|_{\Frob}^2>\ell (1+s) \Tr(\mtx{\Sigma}_2^2\mtx{V}_2^*\mtx{K}\mtx{V}_2)\right\}\leq  (1+s)^{\ell/2} e^{-s\ell/2}.\]
\end{lemma}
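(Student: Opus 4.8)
The quantity $\|\mtx{\Sigma}_2\mtx{\Omega}_2\|_{\Frob}^2$ is a quadratic form in Gaussian variables, so the natural route is a Chernoff (Markov-after-exponentiation) bound on a chi-square-type random variable. Following the same diagonalization used in the proof of \cref{lem_1}, I would write $\mtx{\Omega} = \mtx{Q}_\mtx{K}\mtx{\Lambda}^{1/2}\mtx{G}$ with $\mtx{G}$ a standard Gaussian matrix, so that $\mtx{\Sigma}_2\mtx{\Omega}_2 = \mtx{\Sigma}_2\mtx{V}_2^*\mtx{Q}_\mtx{K}\mtx{\Lambda}^{1/2}\mtx{G} = \mtx{S}\mtx{G}$ with $\mtx{S} := \mtx{\Sigma}_2\mtx{V}_2^*\mtx{Q}_\mtx{K}\mtx{\Lambda}^{1/2}$. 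Then $\|\mtx{S}\mtx{G}\|_{\Frob}^2 = \sum_{i=1}^\ell \|\mtx{S}\vg_i\|_2^2$ where $\vg_1,\dots,\vg_\ell$ are the i.i.d.\ standard Gaussian columns of $\mtx{G}$, and each $\|\mtx{S}\vg_i\|_2^2$ is distributed as $\sum_j \mu_j \xi_{ij}^2$ with $\mu_j$ the eigenvalues of $\mtx{S}\mtx{S}^* = \mtx{\Sigma}_2\mtx{V}_2^*\mtx{K}\mtx{V}_2\mtx{\Sigma}_2$ and $\xi_{ij}$ i.i.d.\ standard normal. Note $\sum_j \mu_j = \Tr(\mtx{\Sigma}_2^2\mtx{V}_2^*\mtx{K}\mtx{V}_2) =: \tau$, which is exactly the constant appearing in the statement.

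**The Chernoff step.**
Next I would apply Markov's inequality to $\exp(\theta \|\mtx{S}\mtx{G}\|_{\Frob}^2)$ for a parameter $\theta \in (0, 1/(2\mu_{\max}))$, using the standard moment generating function of a weighted chi-square: $\E[\exp(\theta \sum_{i,j}\mu_j\xi_{ij}^2)] = \prod_{i=1}^\ell\prod_j (1-2\theta\mu_j)^{-1/2} = \prod_j (1-2\theta\mu_j)^{-\ell/2}$. This gives, for any threshold $\tau' > 0$,
\[
\P\{\|\mtx{S}\mtx{G}\|_{\Frob}^2 > \tau'\} \le e^{-\theta\tau'}\prod_j (1-2\theta\mu_j)^{-\ell/2}.
\]
To turn the product into something clean I would use the elementary inequality $\prod_j(1-2\theta\mu_j)^{-1} \le (1 - 2\theta\sum_j\mu_j)^{-1} = (1-2\theta\tau)^{-1}$, valid for $2\theta\tau < 1$ (this is just the statement that $\prod(1-a_j) \ge 1 - \sum a_j$ for $a_j \in [0,1)$ with $\sum a_j < 1$). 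Hence
\[
\P\{\|\mtx{S}\mtx{G}\|_{\Frob}^2 > \tau'\} \le e^{-\theta\tau'}(1-2\theta\tau)^{-\ell/2}.
\]

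**Optimizing and finishing.**
Now I set $\tau' = \ell(1+s)\tau$ and optimize over $\theta$. Writing $\theta = \frac{1}{2\tau}\cdot\frac{x}{\ell}$ is convenient, or more directly: the bound $e^{-\theta\tau'}(1-2\theta\tau)^{-\ell/2}$ is minimized at $2\theta\tau = 1 - \frac{\ell}{2\theta\tau'}\cdot(\dots)$; cleanly, with $\tau' = \ell(1+s)\tau$ the optimal choice is $2\theta\tau = \frac{s}{1+s}$, giving $e^{-\theta\tau'} = e^{-s\ell/2}$ and $(1-2\theta\tau)^{-\ell/2} = (1+s)^{\ell/2}$, for the final bound $(1+s)^{\ell/2}e^{-s\ell/2}$ exactly as claimed. (One should check $2\theta\tau = s/(1+s) \in [0,1)$, which holds for all $s \ge 0$, and the case $s = 0$ is trivial since the right side is $1$.)

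**Main obstacle.**
The only real subtlety is the step bounding $\prod_j(1-2\theta\mu_j)^{-1}$ by $(1-2\theta\tau)^{-1}$: this is where the dependence on the $\mu_j$ collapses to dependence only on their sum $\tau$, which is what makes the final answer match the statement. I would want to double-check the direction of this inequality and that it does not require $\mu_{\max}$ to be small beyond $2\theta\tau < 1$ (it does not, since each factor $1 - 2\theta\mu_j \ge 1 - 2\theta\tau > 0$ and the product of such terms is $\ge 1 - \sum 2\theta\mu_j = 1 - 2\theta\tau$ by induction). Everything else is the routine Gaussian-chaos Chernoff computation, and the reduction to $\mtx{S}\mtx{G}$ is identical to the one already used for \cref{lem_1}.
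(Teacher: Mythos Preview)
Your proposal is correct and follows essentially the same route as the paper: write $\|\mtx{\Sigma}_2\mtx{\Omega}_2\|_{\Frob}^2$ as a sum over the $\ell$ i.i.d.\ columns of weighted chi-square variables, bound the moment generating function via $\prod_j(1-2\theta\mu_j)\ge 1-2\theta\sum_j\mu_j$, apply Chernoff, and optimize at $2\theta\tau=s/(1+s)$. The only cosmetic difference is that you diagonalize $\mtx{S}\mtx{S}^*$ and work with its eigenvalues $\mu_j$, whereas the paper expands in the eigenbasis of $\mtx{K}$ and works with the weights $\gamma_i=\lambda_i\sum_{n_1}\sigma_{k+n_1}^2(v_{k+n_1}^*\psi_i)^2$; both collections sum to $\tau=\Tr(\mtx{\Sigma}_2^2\mtx{V}_2^*\mtx{K}\mtx{V}_2)$, and your eigenvalue formulation is arguably the cleaner way to justify the independence of the chi-square summands.
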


\begin{proof}
Let $\omega_j$ be the $j$th column of $\mtx{\Omega}$ for $1\leq j\leq \ell$ and $v_1,\ldots,v_n$ be the $n$ columns of the orthonormal matrix $\mtx{V}$. We first remark that 
\begin{equation} \label{eq_distrib_omega}
\|\mtx{\Omega}_2\|_{\Frob}^2 = \sum_{j=1}^\ell Z_j,\qquad Z_j \coloneqq \sum_{n_1=1}^{n-k}\sigma_{k+n_1}^2(\mtx{A}) (v_{k+n_1}^*\omega_j)^2,
\end{equation}
where the $Z_j$ are i.i.d because $\omega_j\sim \mathcal{N}(0,\mtx{K})$ are i.i.d. Let $\lambda_1\geq \cdots\geq  \lambda_n\geq 0$ be the eigenvalues of $\mtx{K}$ with eigenvectors $\psi_1,\ldots,\psi_n\in\R^n$. For $1\leq j\leq \ell$, we have,
\[\omega_j = \sum_{i=1}^n (c_i^{(j)})^2\sqrt{\lambda_i}\psi_i,\]
where $c_i^{(j)}\sim \mathcal{N}(0,1)$ are i.i.d for $1\leq i\leq n$ and $1\leq j\leq \ell$. Then,
\[Z_j = \sum_{i=1}^n (c_i^{(j)})^2 \lambda_i\sum_{n_1=1}^{n-k}\sigma_{k+n_1}^2(\mtx{A}) (v_{k+n_1}^*\psi_i)^2
= \sum_{i=1}^n X_i\]
where the $X_i$ are independent. Let $\gamma_i = \lambda_i\sum_{n_1=1}^{n-k} \sigma_{k+n_1}^2(\mtx{A})(v_{k+n_1}^*\psi_i)^2$, then $X_i \sim \gamma_i\chi^2$ for $1\leq i\leq n$. 

Let $0<\theta<1/(2\sum_{i=1}^n\gamma_i)$, we can bound the moment generating function of $\sum_{i=1}^n X_i$ as
\[\E\left[e^{\theta \sum_{i=1}^n X_i}\right] = \prod_{i=1}^n\E\left[e^{\theta X_i}\right] = \prod_{i=1}^n (1-2\theta\gamma_i)^{-1/2}\leq \left(1-2\theta \sum_{i=1}^n \gamma_i\right)^{-1/2}\]
because the $X_i/\gamma_i$ are independent and follow a chi-squared distribution. The right inequality is obtained by showing by recurrence that, if $a_1,\ldots,a_n\geq 0$ are such that $\sum_{i=1}^n a_i\leq 1$, then $\prod_{i=1}^n (1-a_i)\geq 1-\sum_{i=1}^n a_i$. For convenience, we define $C_1 \coloneqq \sum_{i=1}^n \gamma_n$, we have shown that 
\[\E\left[e^{\theta Z_j}\right]\leq (1-2\theta C_1)^{-1/2}.\]
Moreover, we find that
\begin{align*}
C_1 &= \sum_{n_1=1}^{n-k} \sigma_{k+n_1}^2 v_{k+n_1}^*\left(\sum_{i=1}^n \psi_i^*\lambda_i\psi_i\right)v_{k+n_1}=\sum_{n_1=1}^{n-k} \sigma_{k+n_1}^2(\mtx{A}) v_{k+n_1}^*\mtx{K} v_{k+n_1}\\
&=\Tr(\mtx{\Sigma}_2^2\mtx{V}_2^*\mtx{K}\mtx{V}_2).
\end{align*}

Let $s\geq 0$ and $0<\theta<1/(2C_1)$, by Chernoff bound~\cite[Thm.~1]{chernoff1952measure}, we obtain
\begin{align*}
\mathbb{P} \left\{\|\mtx{\Sigma}_2\mtx{\Omega}_2\|_{\Frob}^2 > \ell (1+s)C_1\right\} &\leq e^{-(1+s)C_1\ell \theta}\E\left[e^{\theta Z_j}\right]^\ell \\
&=e^{-(1+s)C_1\ell \theta}(1-2\theta C_1)^{-\ell/2}.
\end{align*}
We minimize the bound over $0<\theta<1/(2\Tr(K))$ by choosing $\theta = s/(2(1+s)C_1)$, which gives
\[\mathbb{P} \left\{\|\mtx{\Sigma}_2\mtx{\Omega}_2\|_{\Frob}^2 > \ell (1+s)C_1\right\}\leq (1+s)^{\ell/2} e^{-\ell s/2}.\]
\end{proof}

The proof of \cref{th_tropp_random_svd_Frob} will require to bound the term $\|\mtx{\Omega}_1^\dagger\|_{\textup{F}}^2$, which we achieve using the following result~\cite[Lem.~3]{boulle2021learning}.
\begin{lemma}[\citealt{boulle2021learning}] \label{lem_3_5}
Let $k,\ell\geq 1$ such that $\ell-k\geq 4$, with the notations introduced at the beginning of the section, for all $t\geq 1$, we have
\[\mathbb{P}\left\{\|\mtx{\Omega}_1^\dagger\|_{\textup{F}}^2>3t^2\frac{\Tr((\mtx{V}_1^*\mtx{K}\mtx{V}_1)^{-1})}{\ell-k+1}\right\}\leq t^{-(\ell-k)}.\]
\end{lemma}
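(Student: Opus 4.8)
The plan is to reduce the statement to a deviation bound for the Frobenius norm of a pseudoinverted standard Gaussian matrix, and then to invoke the classical bound of this type~\cite[Prop.~10.4]{halko2011finding}. First I would mimic the opening of the proof of \cref{lem_1}: eigendecompose $\mtx{K}=\mtx{Q}_\mtx{K}\mtx{\Lambda}\mtx{Q}_\mtx{K}^*$ and factor $\mtx{\Omega}=\mtx{Q}_\mtx{K}\mtx{\Lambda}^{1/2}\mtx{G}$, where $\mtx{G}\in\R^{n\times\ell}$ is a standard Gaussian matrix. Then $\mtx{\Omega}_1=\mtx{V}_1^*\mtx{\Omega}=\mtx{M}\mtx{G}$ with $\mtx{M}\coloneqq\mtx{V}_1^*\mtx{Q}_\mtx{K}\mtx{\Lambda}^{1/2}\in\R^{k\times n}$, and crucially $\mtx{M}\mtx{M}^*=\mtx{V}_1^*\mtx{K}\mtx{V}_1$, which is symmetric positive definite (its invertibility is implicit in the statement, since $\Tr((\mtx{V}_1^*\mtx{K}\mtx{V}_1)^{-1})$ appears on the right-hand side). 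I would then take a thin singular value decomposition $\mtx{M}=\mtx{U}_M\mtx{D}\mtx{W}_M^*$ with $\mtx{U}_M\in\R^{k\times k}$ orthogonal, $\mtx{D}\in\R^{k\times k}$ positive diagonal, and $\mtx{W}_M\in\R^{n\times k}$ having orthonormal columns; since $\mtx{W}_M^*$ has orthonormal rows, $\tilde{\mtx{G}}\coloneqq\mtx{W}_M^*\mtx{G}$ is again a standard $k\times\ell$ Gaussian matrix, and $\mtx{\Omega}_1=\mtx{U}_M\mtx{D}\tilde{\mtx{G}}$.

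Next I would identify the two quantities in the claim. Because $\ell\geq k$, the matrix $\tilde{\mtx{G}}$, hence also $\mtx{D}\tilde{\mtx{G}}$, has full row rank almost surely; as $\mtx{U}_M\mtx{D}$ is square and invertible, $\mtx{\Omega}_1^\dagger=\tilde{\mtx{G}}^\dagger\mtx{D}^{-1}\mtx{U}_M^*$, and orthogonality of $\mtx{U}_M$ gives $\|\mtx{\Omega}_1^\dagger\|_\Frob=\|\tilde{\mtx{G}}^\dagger\mtx{D}^{-1}\|_\Frob$, while $\|\mtx{D}^{-1}\|_\Frob^2=\Tr(\mtx{D}^{-2})=\Tr((\mtx{M}\mtx{M}^*)^{-1})=\Tr((\mtx{V}_1^*\mtx{K}\mtx{V}_1)^{-1})$. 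Thus the lemma is equivalent to the following statement about a standard $k\times\ell$ Gaussian matrix $\tilde{\mtx{G}}$ with $\ell-k\geq4$ and a fixed invertible $k\times k$ matrix $\mtx{B}$ (to be used with $\mtx{B}=\mtx{D}^{-1}$): for all $t\geq1$,
\[
\mathbb{P}\left\{\|\tilde{\mtx{G}}^\dagger\mtx{B}\|_\Frob^2>3t^2\,\frac{\|\mtx{B}\|_\Frob^2}{\ell-k+1}\right\}\leq t^{-(\ell-k)}.
\]

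To prove this, I would note that the case $\mtx{B}=\mtx{I}_k$, where $\|\mtx{B}\|_\Frob^2=k$, is precisely~\cite[Prop.~10.4]{halko2011finding}, and that the general case follows from the same ingredients. Concretely, $\|\tilde{\mtx{G}}^\dagger\mtx{B}\|_\Frob^2=\Tr(\mtx{B}\mtx{B}^*(\tilde{\mtx{G}}\tilde{\mtx{G}}^*)^{-1})$, where $\tilde{\mtx{G}}\tilde{\mtx{G}}^*$ is a Wishart matrix with $\ell$ degrees of freedom; rotating into the eigenbasis of $\mtx{B}\mtx{B}^*$ (which keeps $\tilde{\mtx{G}}\tilde{\mtx{G}}^*$ Wishart) reduces this to controlling a weighted sum $\sum_i\delta_i[\mtx{W}^{-1}]_{ii}$ with $\delta_i\geq0$ and $\sum_i\delta_i=\|\mtx{B}\|_\Frob^2$, where each diagonal entry of the inverse-Wishart matrix is an inverse $\chi^2_{\ell-k+1}$ variable by the Bartlett/Schur-complement structure of a Wishart matrix. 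A Markov-type estimate on a suitable power of this weighted sum—in the same spirit as the Chernoff argument in the proof of \cref{prop_control_matrices}—then yields the polynomial decay $t^{-(\ell-k)}$ and the constant $3$ under the hypothesis $\ell-k\geq4$. Substituting $\mtx{B}=\mtx{D}^{-1}$ and $\|\mtx{B}\|_\Frob^2=\Tr((\mtx{V}_1^*\mtx{K}\mtx{V}_1)^{-1})$ finishes the argument.

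The hard part is this last step: the reduction of the first two paragraphs is routine linear algebra, but pinning down the sharp constant and the exponent $\ell-k$ in the deviation bound for $\|\tilde{\mtx{G}}^\dagger\mtx{B}\|_\Frob$ takes care, because the diagonal entries of the inverse-Wishart matrix are correlated. In practice one either quotes~\cite[Prop.~10.4]{halko2011finding} verbatim for $\mtx{B}=\mtx{I}$ (which, combined with the reduction above, already covers the case $\mtx{K}=\mtx{I}$) or reproduces its chi-square/Chernoff estimate in the matrix-weighted setting to obtain the general bound stated in the lemma.
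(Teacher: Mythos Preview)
The paper does not prove this lemma; it is quoted verbatim from~\cite[Lem.~3]{boulle2021learning} and stated without proof, so there is no in-paper argument to compare against.

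That said, your proposal is sound. The reduction in your first two paragraphs is correct: writing $\mtx{\Omega}_1=\mtx{U}_M\mtx{D}\tilde{\mtx{G}}$ with $\tilde{\mtx{G}}$ a $k\times\ell$ standard Gaussian and $\mtx{D}^2$ similar to $\mtx{V}_1^*\mtx{K}\mtx{V}_1$ gives $\|\mtx{\Omega}_1^\dagger\|_\Frob^2=\Tr\!\big(\mtx{D}^{-2}(\tilde{\mtx{G}}\tilde{\mtx{G}}^*)^{-1}\big)$ and $\|\mtx{D}^{-1}\|_\Frob^2=\Tr((\mtx{V}_1^*\mtx{K}\mtx{V}_1)^{-1})$, exactly as you say. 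For the last step, the correlation among the $X_i\coloneqq[(\tilde{\mtx{G}}\tilde{\mtx{G}}^*)^{-1}]_{ii}$ that you flag is not a real obstacle. With $c_i\coloneqq[\mtx{D}^{-2}]_{ii}\geq 0$, $Z\coloneqq\sum_i c_i X_i$, and $q\coloneqq(\ell-k)/2\geq 2$, convexity of $x\mapsto x^q$ yields
\[
Z^q=\Big(\sum_i c_i\Big)^{q}\Big(\sum_i \tfrac{c_i}{\sum_j c_j}\,X_i\Big)^{q}\leq \Big(\sum_i c_i\Big)^{q-1}\sum_i c_i\,X_i^q,
\]
and since every $X_i$ has the same marginal $1/\chi^2_{\ell-k+1}$ (Bartlett/Schur complement for Wishart), $\E[Z^q]\leq(\sum_i c_i)^q\,\E[X_1^q]$. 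Applying Markov's inequality with exponent $q$ then reduces the matrix-weighted statement to the very inverse--chi-square moment estimate that drives~\cite[Prop.~10.4]{halko2011finding}, producing the constant $3$ and the exponent $t^{-(\ell-k)}$ under $\ell-k\geq 4$. So the ``hard part'' you identify is in fact no harder than the $\mtx{B}=\mtx{I}$ case once one uses Jensen plus exchangeability of the diagonals; no delicate handling of the joint law is needed.
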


We now prove \cref{th_tropp_random_svd_Frob}, which provides a refined probability bound for the performance of the generalized randomized SVD on matrices.

\begin{proof}[Proof of \cref{th_tropp_random_svd_Frob}]
Using \cite[Thm.~2]{boulle2021learning} and the submultiplicativity of the Frobenius norm, we have
\begin{equation} \label{eq_ineq_frob}
\|\mtx{A}-\mtx{Q}\mtx{Q}^*\mtx{A}\|_{\Frob}^2\leq \|\mtx{\Sigma}_2\|_{\Frob}^2+\|\mtx{\Sigma}_2\mtx{\Omega}_2\|_{\Frob}^2\|\mtx{\Omega}_1^\dagger\|_{\Frob}^2.
\end{equation}
Let $\ell = k+p$ with $p\geq 4$, combining \cref{prop_control_matrices,lem_3_5} to bound the terms $\|\mtx{\Sigma}_2\mtx{\Omega}_2\|_{\Frob}^2$ and $\|\mtx{\Omega}_1^\dagger\|_{\Frob}^2$ in \cref{eq_ineq_frob} yields the following probability estimate:
\begin{align*}
\|\mtx{A}-\mtx{Q}\mtx{Q}^*\mtx{A}\|_{\textup{F}}^2 &\leq \|\mtx{\Sigma}_2\|_{\Frob}^2+3t^2(1+s)\frac{k+p}{p+1}\Tr((\mtx{V}_1^*\mtx{K}\mtx{V}_1)^{-1})\Tr(\mtx{\Sigma}_2^2\mtx{V}_2^*\mtx{K}\mtx{V}_2)\\
&\leq \left(1+3t^2(1+s)\frac{(k+p)k}{p+1}\frac{\beta_k}{\gamma_k}\right)\sum_{j=k+1}^n\sigma_j^2(\mtx{A}),
\end{align*}
with failure probability at most $t^{-p}+(1+s)^{(k+p)/2}e^{-s(k+p)/2}$. Note that we introduced $\gamma_k\coloneqq k/(\lambda_1 \Tr((\mtx{V}_1^*\mtx{K}\mtx{V}_1)^{-1})))$ and $\beta_k \coloneqq \Tr(\mtx{\Sigma}_2^2\mtx{V}_2^*\mtx{K}\mtx{V}_2)/(\lambda_1\|\mtx{\Sigma}_2\|_\Frob^2)$. We conclude the proof by defining $u=\sqrt{1+s}\geq 1$.
\end{proof}

The following Lemma provides an estimate of the quantity $\beta_k$ introduced in the statement of \cref{th_tropp_random_svd_Frob}.

\begin{lemma} \label{lemm_beta_bound}
Let $\beta_k = \Tr(\mtx{\Sigma}_2^2\mtx{V}_2^*\mtx{K}\mtx{V}_2)/(\lambda_1\|\mtx{\Sigma}_2\|_\Frob^2)$, then the following inequality holds
\[\beta_k\leq \sum_{j=k+1}^{n}\frac{\lambda_{j-k}}{\lambda_1}\sigma_j^2(\mtx{A})\bigg/ \sum_{j=k+1}^n\sigma_j^2(\mtx{A}).\]
\end{lemma}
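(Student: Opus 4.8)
The plan is to bound the numerator $\Tr(\mtx{\Sigma}_2^2\mtx{V}_2^*\mtx{K}\mtx{V}_2)$ by comparing the positive semi-definite matrix $\mtx{V}_2^*\mtx{K}\mtx{V}_2\in\R^{(n-k)\times(n-k)}$ against the diagonal matrix formed from the top eigenvalues of $\mtx{K}$, and then read off the weighting against the $\sigma_j^2(\mtx{A})$. First I would write the trace explicitly in terms of the columns $v_{k+1},\dots,v_n$ of $\mtx{V}_2$:
\[
\Tr(\mtx{\Sigma}_2^2\mtx{V}_2^*\mtx{K}\mtx{V}_2)=\sum_{j=k+1}^n \sigma_j^2(\mtx{A})\, v_j^*\mtx{K}v_j .
\]
So it suffices to show $v_j^*\mtx{K}v_j\le \lambda_{j-k}$ for each $j$ from $k+1$ to $n$, since then the stated bound follows by dividing through by $\lambda_1\|\mtx{\Sigma}_2\|_\Frob^2=\lambda_1\sum_{j=k+1}^n\sigma_j^2(\mtx{A})$.

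The inequality $v_j^*\mtx{K}v_j\le\lambda_{j-k}$ is the crux, and it is a Cauchy-interlacing/Ky Fan type statement. The cleanest route is via the Courant--Fischer min-max characterization applied to the compression $\mtx{B}:=\mtx{V}_2^*\mtx{K}\mtx{V}_2$. Since $\{v_{k+1},\dots,v_n\}$ spans an $(n-k)$-dimensional subspace and the $v_j$ are orthonormal, the $i$th largest eigenvalue $\mu_i(\mtx{B})$ satisfies, by interlacing, $\mu_i(\mtx{B})\le\lambda_i(\mtx{K})$ for $1\le i\le n-k$. Now $v_j^*\mtx{K}v_j = e_{j-k}^*\mtx{B}\,e_{j-k}$ is a diagonal entry of $\mtx{B}$; the $m$th largest diagonal entry of a symmetric PSD matrix is at most its $m$th largest eigenvalue (a standard consequence of Schur--Horn majorization, or directly of Cauchy interlacing applied to principal submatrices). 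I would order the diagonal entries of $\mtx{B}$ decreasingly; because $\sigma_{k+1}(\mtx{A})\ge\cdots\ge\sigma_n(\mtx{A})$, pairing the $(j-k)$th diagonal entry with the $(j-k)$th largest slot costs nothing after a rearrangement argument, giving $v_j^*\mtx{K}v_j\le\mu_{j-k}(\mtx{B})\le\lambda_{j-k}(\mtx{K})$. Combining the two chains completes the estimate.

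The main obstacle is handling the rearrangement carefully: the diagonal entries $v_j^*\mtx{K}v_j$ need not be sorted in the same order as the $\sigma_j^2(\mtx{A})$, so one must invoke a rearrangement inequality (the sum $\sum_j \sigma_j^2(\mtx{A})\,d_{\pi(j)}$ is maximized when the $d$'s are sorted decreasingly) together with the majorization bound on the sorted diagonal. An alternative, perhaps slicker, is to use the Ky Fan maximum principle directly: for any $1\le m\le n-k$, the sum of the $m$ largest diagonal entries of $\mtx{B}$ is at most $\sum_{i=1}^m\lambda_i(\mtx{K})$, and then apply Abel summation against the nonincreasing weights $\sigma_j^2(\mtx{A})$ to conclude $\sum_{j=k+1}^n\sigma_j^2(\mtx{A})\,v_j^*\mtx{K}v_j\le\sum_{j=k+1}^n\sigma_j^2(\mtx{A})\,\lambda_{j-k}(\mtx{K})$ without ever sorting the diagonal. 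I would likely present this Ky Fan plus Abel-summation version, since it avoids case analysis and matches how the companion bound on $1/\gamma_k$ is derived in~\citealt{boulle2021learning}.
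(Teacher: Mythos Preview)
Your preferred route---Ky Fan's maximum principle plus Abel summation against the nonincreasing weights $\sigma_j^2(\mtx{A})$, followed by the interlacing bound $\mu_i(\mtx{B})\le\lambda_i(\mtx{K})$---is correct and is essentially the paper's proof. The paper invokes Von Neumann's trace inequality to obtain $\Tr(\mtx{\Sigma}_2^2\mtx{B})\le\sum_{i}\sigma_{k+i}^2(\mtx{A})\,\mu_i(\mtx{B})$ in one line and then applies the same Cauchy interlacing step; your Ky Fan plus Abel argument is precisely a standard proof of Von Neumann's inequality when one factor is diagonal, so the two arguments coincide. (Incidentally, the rearrangement step you worry about is unnecessary: for any $m$, the sum of the \emph{first} $m$ diagonal entries of $\mtx{B}$, not only the $m$ largest, is already bounded by $\sum_{i=1}^m\mu_i(\mtx{B})$, since it equals the trace of a rank-$m$ coordinate compression of $\mtx{B}$; Abel summation then applies directly.)

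One caution about your first route: the assertion that ``the $m$th largest diagonal entry of a symmetric PSD matrix is at most its $m$th largest eigenvalue'' is false. Schur--Horn gives majorization of the diagonal by the spectrum (partial-sum inequalities), not termwise dominance; for instance, a $2\times 2$ rank-one PSD matrix with diagonal $(2,1)$ has eigenvalues $(3,0)$, so its second-largest diagonal entry $1$ exceeds its second eigenvalue $0$. Cauchy interlacing on $1\times 1$ principal submatrices only yields $\mu_{n-k}(\mtx{B})\le d_j\le\mu_1(\mtx{B})$, which is no help either. So the pointwise bound $v_j^*\mtx{K}v_j\le\lambda_{j-k}$ cannot be established this way, and you were right to abandon it for the aggregate argument.
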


\begin{proof}
Let $\mu_1\geq \cdots\geq \mu_{n-k}$ be the eigenvalues of the matrix $\mtx{V}_2^*\mtx{K}\mtx{V}_2$. Using Von Neumann's trace inequality~\citep{mirsky1975trace,von1937some}, we have
\[\Tr(\mtx{\Sigma}_2^2\mtx{V}_2^*\mtx{K}\mtx{V}_2)\leq \sum_{j=k+1}^n\mu_{j-k}\sigma^2_j(\mtx{A}).\]
Then, the matrix $\mtx{V}_2^*\mtx{K}\mtx{V}_2$ is a principal submatrix of $\mtx{V}^*\mtx{K}\mtx{V}$, which has the same eigenvalues of $K$. Therefore, by~\citep[Thm.~6.46]{kato2013perturbation}, the eigenvalues of $\mtx{V}_2^*\mtx{K}\mtx{V}_2$ are individually bounded by the eigenvalues of $\mtx{K}$, i.e., $\mu_j \leq \lambda_{j}$ for $1\leq j\leq n-k$, which concludes the proof.
\end{proof}

Finally, we highlight that the statement of~\cref{th_tropp_random_svd_Frob} can be simplified by choosing $p=5$, $t = 4$, and $u=3$.

\begin{corollary}[Generalized randomized SVD] \label{cor_gen_svd}
Let $\mtx{A}$ be an $n\times n$ matrix and $k\geq 1$ an integer. If $\mtx{\Omega}\in\mathbb{R}^{n\times (k+5)}$ is a Gaussian random matrix, where each column is sampled from a multivariate Gaussian distribution with symmetric positive semi-definite covariance matrix $\mtx{K}\in\mathbb{R}^{n\times n}$, and $\mtx{Q}\mtx{R} = \mtx{A}\mtx{\Omega}$ is the economized QR decomposition of $\mtx{A}\mtx{\Omega}$, then 
\[\mathbb{P}\left[ \| \mtx{A} - \mtx{Q}\mtx{Q}^*\mtx{A} \|_\Frob \leq \left(1+ 9 \sqrt{k(k+5)\frac{\beta_k}{\gamma_k}} \right) \sqrt{\sum_{j=k+1}^n \sigma_j^2(\mtx{A})} \,\right] \geq 0.999.\]
\end{corollary}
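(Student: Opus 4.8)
The plan is to obtain \cref{cor_gen_svd} as a direct specialization of \cref{th_tropp_random_svd_Frob}: one takes the square case $m=n$ (to which the theorem applies verbatim, since nothing in its statement or proof uses $m>n$), substitutes the concrete parameters $p=5$, $t=4$, $u=3$, and then checks that the resulting multiplicative constant is dominated by $9$ and that the resulting failure probability is below $10^{-3}$. Note $p=5\geq 4$, so the hypotheses of \cref{th_tropp_random_svd_Frob} are met, and it yields that with probability at least $1-\bigl(t^{-p}+[ue^{-(u^2-1)/2}]^{k+p}\bigr)$ one has $\|\mtx{A}-\mtx{Q}\mtx{Q}^*\mtx{A}\|_\Frob \leq \bigl(1+ut\sqrt{(k+p)\tfrac{3k}{p+1}\tfrac{\beta_k}{\gamma_k}}\bigr)\sqrt{\sum_{j=k+1}^n\sigma_j^2(\mtx{A})}$.

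First I would simplify the constant in front of the best-approximation error. Substituting $u=3$, $t=4$, $p=5$ gives $ut\sqrt{(k+p)\tfrac{3k}{p+1}} = 12\sqrt{(k+5)\tfrac{3k}{6}} = 12\sqrt{\tfrac{k(k+5)}{2}} = 6\sqrt{2}\,\sqrt{k(k+5)}$. Since $6\sqrt{2}=\sqrt{72}<\sqrt{81}=9$, the prefactor is bounded above by $1+9\sqrt{k(k+5)\,\beta_k/\gamma_k}$, which is exactly the constant in the corollary.

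Next I would bound the failure probability. With $t=4$, $p=5$ the first term is $t^{-p}=4^{-5}=1/1024$. For the second term, $ue^{-(u^2-1)/2}=3e^{-4}\approx 5.5\times 10^{-2}<1$, so because $k\geq 1$ we have $[3e^{-4}]^{k+5}\leq [3e^{-4}]^{6}\approx 2.8\times 10^{-8}$. Hence the total failure probability is at most $1/1024+(3e^{-4})^6<10^{-3}$, and the event in the corollary holds with probability at least $1-10^{-3}=0.999$.

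There is essentially no obstacle here beyond elementary arithmetic: the only points needing a moment's care are the inequality $6\sqrt{2}<9$ (equivalently $72<81$) and the verification that $1/1024+(3e^{-4})^6$ stays comfortably below $0.001$ — both hold with a small but genuine margin. It is also worth remarking explicitly in the write-up that \cref{th_tropp_random_svd_Frob}, although stated for $m\times n$ matrices, specializes without change to the square case $m=n$ treated in the corollary.
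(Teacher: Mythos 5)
Your proposal is correct and matches the paper's intended argument exactly: the paper derives \cref{cor_gen_svd} from \cref{th_tropp_random_svd_Frob} by the same substitution $p=5$, $t=4$, $u=3$, and your arithmetic ($6\sqrt{2}<9$; $4^{-5}+(3e^{-4})^{k+5}<10^{-3}$ for $k\geq 1$) checks out. The only difference is that you spell out the verification the paper leaves implicit.
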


In contrast, a simplification of \cref{thm:RandomizedSVD} by choosing $t=6$ and $u=4$ gives the following result.

\begin{corollary}[Randomized SVD] 
Let $\mtx{A}$ be an $n\times n$ matrix and $k\geq 1$ an integer. If $\mtx{\Omega}\in\mathbb{R}^{n\times (k+5)}$ is a standard Gaussian random matrix and $\mtx{Q}\mtx{R} = \mtx{A}\mtx{\Omega}$ is the economized QR decomposition of $\mtx{A}\mtx{\Omega}$, then 
\[\mathbb{P}\left[ \| \mtx{A} - \mtx{Q}\mtx{Q}^*\mtx{A} \|_\Frob \leq \left(1 + 16\sqrt{k+5} \right) \sqrt{\sum_{j=k+1}^n \sigma_j^2(\mtx{A})} \,\right] \geq 0.999.\]
\end{corollary}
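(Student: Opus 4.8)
The plan is to obtain the corollary as a direct specialization of \cref{thm:RandomizedSVD}, followed by absorbing the spectral-norm term into the Frobenius-norm term. First I would invoke \cref{thm:RandomizedSVD} with the oversampling parameter $p=5$ (so that $k+p=k+5$) and the free parameters $t=6$, $u=4$; these choices are admissible because $p=5\geq 4$ and $t,u\geq 1$. The failure probability guaranteed by \cref{thm:RandomizedSVD} is then at most $2\cdot 6^{-5}+e^{-16} = 2/7776 + e^{-16} < 10^{-3}$, which already yields the claimed confidence level of $0.999$.

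Next I would simplify the coefficients appearing in \cref{eq:RandomizedSVDBound} at these parameter values. The leading coefficient becomes $1 + t\sqrt{3k/(p+1)} = 1 + 6\sqrt{k/2} = 1 + 3\sqrt{2}\,\sqrt{k}$, and the prefactor of $\sigma_{k+1}(\mtx{A})$ becomes $ut\sqrt{k+p}/(p+1) = 24\sqrt{k+5}/6 = 4\sqrt{k+5}$. Since $\sigma_{k+1}(\mtx{A}) \leq \big(\sum_{j=k+1}^n \sigma_j^2(\mtx{A})\big)^{1/2}$, the whole right-hand side of \cref{eq:RandomizedSVDBound} is bounded by $\big(1 + 3\sqrt{2}\,\sqrt{k} + 4\sqrt{k+5}\big)\big(\sum_{j=k+1}^n\sigma_j^2(\mtx{A})\big)^{1/2}$.

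Finally, using $k\geq 1$ so that $1\leq \sqrt{k+5}$ and $\sqrt{k}\leq \sqrt{k+5}$, the coefficient is at most $(1+3\sqrt{2}+4)\sqrt{k+5} = (5+3\sqrt{2})\sqrt{k+5}$, and $5+3\sqrt{2}\approx 9.24 < 16$, which gives exactly the inequality inside the probability statement. There is no genuine obstacle in this argument; the only points requiring a moment's care are checking that $2\cdot 6^{-5}+e^{-16}$ indeed falls below $10^{-3}$ and that the three elementary estimates used to merge the two terms hold for every $k\geq 1$. One could in fact state the corollary with the sharper constant $5+3\sqrt{2}$ in place of $16$.
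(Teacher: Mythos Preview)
Your proposal is correct and matches the paper's approach exactly: the paper simply states that the corollary follows from \cref{thm:RandomizedSVD} by choosing $t=6$ and $u=4$ (with $p=5$), and your write-up fills in precisely those details. Your observation that the constant $16$ could be sharpened to $5+3\sqrt{2}$ is a valid bonus.
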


\section{Continuity of the Jacobi kernel} \label{sec_jacobi_kernel}

Here, we show that if the kernel's eigenvalue sequence, $\{\lambda_j\}$, is such that $\sum_{j=1}^\infty j\lambda_{j}<\infty$, then the Jacobi kernel $K_{\text{Jac}}^{(2,2)}$ defined by \cref{eq:JacobiKernel} is continuous. First, note that $\tilde{P}_j^{(2,2)}$ is a scaled ultraspherical polynomial $\tilde{C}^{(5/2)}_j$ with parameter $5/2$ and degree $j\geq 0$ so it can be bounded by the following proposition.
 
\begin{proposition} \label{prop_bound_ultra}
Let $\tilde{C}^{(5/2)}_j$ be the ultraspherical polynomial of degree $j$ with parameter $5/2$, normalized such that $\int_{-1}^1 (1-x^2)^2\tilde{C}^{(5/2)}_j(x)^2\d x=1$. Then,
\begin{equation} \label{eq_bound_ultra}
\max_{x\in [-1,1]}|(1-x^2)\tilde{C}^{(5/2)}_j(x)| \leq 2\sqrt{j+5/12}, \quad j\geq 0.
\end{equation}
\end{proposition}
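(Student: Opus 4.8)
The plan is to trade the ultraspherical polynomial for Legendre polynomials, for which sharp uniform bounds are classical, using two exact identities. First I would apply the ladder relation $\frac{d}{dx}C_n^{(\lambda)} = 2\lambda\,C_{n-1}^{(\lambda+1)}$ twice, starting from $C_m^{(1/2)}=P_m$ (the Legendre polynomial): this gives $P_{j+2}'' = 3\,C_j^{(5/2)}$, i.e. $C_j^{(5/2)} = \tfrac13 P_{j+2}''$. Then I would remove the second derivative: Legendre's equation $(1-x^2)P_n'' - 2xP_n' + n(n+1)P_n = 0$ together with the recurrence $xP_n' = P_{n-1}' + nP_n$ yields $(1-x^2)P_n'' = 2P_{n-1}' - n(n-1)P_n$, and taking $n=j+2$ gives the key identity
\[
(1-x^2)\,C_j^{(5/2)}(x) \;=\; \tfrac13\big(2P_{j+1}'(x) - (j+1)(j+2)P_{j+2}(x)\big).
\]

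Next I would bound the right-hand side uniformly on $[-1,1]$. Because $P_{j+1}'$ has a Legendre expansion with nonnegative coefficients, $\max_{[-1,1]}|P_{j+1}'| = P_{j+1}'(1) = \tfrac12(j+1)(j+2)$, and trivially $\max_{[-1,1]}|P_{j+2}| = 1$; the triangle inequality then gives $\max_{[-1,1]}\big|(1-x^2)C_j^{(5/2)}(x)\big| \le \tfrac23(j+1)(j+2)$. Finally I would pass to the normalized polynomial $\tilde C_j^{(5/2)} = \kappa_j C_j^{(5/2)}$, where the standard ultraspherical $L^2$-norm identity gives $\kappa_j^{-2} = \int_{-1}^1 (1-x^2)^2 C_j^{(5/2)}(x)^2\,dx = \dfrac{(j+1)(j+2)(j+3)(j+4)}{9\,(j+5/2)}$. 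Substituting,
\[
\max_{[-1,1]}\big|(1-x^2)\tilde C_j^{(5/2)}(x)\big| \;\le\; \tfrac23\kappa_j(j+1)(j+2) \;=\; 2\sqrt{\frac{(j+1)(j+2)(j+5/2)}{(j+3)(j+4)}},
\]
and the proof closes with the elementary observation that $(j+\tfrac{5}{12})(j+3)(j+4) - (j+1)(j+2)(j+\tfrac52) = \tfrac{23}{12}j^2 + \tfrac{65}{12}j \ge 0$ for $j\ge 0$, so the radicand is at most $j + \tfrac5{12}$.

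I do not anticipate a real obstacle here: the argument is short once the two identities are in hand. The only place needing care is pinning down the exact constant in the ultraspherical norm $\kappa_j$ — this is precisely what turns the easy bound $2\sqrt{j+5/2}$ into the stated $2\sqrt{j+5/12}$ — together with keeping the normalization and sign conventions for $C_j^{(\lambda)}$, the ladder relation, and $xP_n'=P_{n-1}'+nP_n$ mutually consistent. An alternative phrasing would identify $(1-x^2)C_j^{(5/2)}$ with the associated Legendre function $\tfrac13 P_{j+2}^2$ and invoke a known uniform bound for $P_n^2$, but the self-contained Legendre route above seems cleanest.
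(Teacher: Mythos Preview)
Your proof is correct and is essentially the paper's argument in Legendre-polynomial clothing: since $C_j^{(3/2)}=P_{j+1}'$ and $C_{j+2}^{(1/2)}=P_{j+2}$, your identity $(1-x^2)C_j^{(5/2)}=\tfrac13\bigl(2P_{j+1}'-(j+1)(j+2)P_{j+2}\bigr)$ is exactly the paper's $(1-x^2)C_j^{(5/2)}=\tfrac23 C_j^{(3/2)}-\tfrac{(j+1)(j+2)}{3}C_{j+2}^{(1/2)}$, and the term-wise bounds $\tfrac{(j+1)(j+2)}{2}$ and $1$, the normalization constant, and the resulting radicand $\tfrac{(j+1)(j+2)(j+5/2)}{(j+3)(j+4)}$ all coincide. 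The only cosmetic difference is that the paper derives the identity from two DLMF ultraspherical recurrences (18.9.7--18.9.8) while you derive it from Legendre's differential equation; your explicit polynomial check that $\tfrac{(j+1)(j+2)(j+5/2)}{(j+3)(j+4)}\le j+\tfrac{5}{12}$ is a small bonus over the paper, which asserts this last inequality without justification.
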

\begin{proof}
Let $j\geq 0$ and $x\in[-1,1]$, according to~\cite[Table~18.3.1]{NIST_DLMF},
\begin{equation} \label{eq_norm_ultra}
\tilde{C}^{(5/2)}_j(x) = 3\sqrt{\frac{j+5/2}{(j+1)(j+2)(j+3)(j+4)}}C^{(5/2)}_j(x),
\end{equation}
where $C^{(5/2)}_j(x)$ is the standard ultraspherical polynomial. Using~\cite[(18.9.8)]{NIST_DLMF}, we have
\[(1-x^2)C^{(5/2)}_j(x) = \frac{(j+3)(j+4)C_j^{(3/2)}(x)-(j+1)(j+2)C_{j+2}^{(3/2)}(x)}{6(j+5/2)}.\]
By using~\cite[(18.9.7)]{NIST_DLMF}, we have $(C_{j+2}^{(3/2)}(x)-C_{j}^{(3/2)}(x))/2 = (j+5/2)C_{j+2}^{(1/2)}(x)$ and hence, 
\[(1-x^2)C^{(5/2)}_j(x) = \frac{2}{3}C_j^{(3/2)}(x)-\frac{(j+1)(j+2)}{3}C_{j+2}^{(1/2)}(x).\]
We bound the two terms with~\cite[(18.14.4)]{NIST_DLMF} to obtain the following inequalities:
\[|C_{j}^{(3/2)}(x)|\leq \frac{(j+1)(j+2)}{2},\qquad |C_{j+2}^{(1/2)}(x)|\leq 1.\]
Hence, $|(1-x^2)C^{(5/2)}_j(x)| \leq 2(j+1)(j+2)/3$ and following \cref{eq_norm_ultra} we obtain
\[|(1-x^2)\tilde{C}^{(5/2)}_j(x)|\leq 2\sqrt{\frac{(j+1)(j+2)(j+5/2)}{(j+3)(j+4)}}\leq 2\sqrt{j+5/12},\]
which concludes the proof.
\end{proof}

The bound given in \cref{prop_bound_ultra} differs by a factor of $4/3$ from the numerically observed upper bound ($1.5\sqrt{j+5/12}$\,). This result shows that if $\sum_{j=1}^\infty j\lambda_j<\infty$, then the series of functions in \cref{eq:JacobiKernel} converges uniformly and $K_{\text{Jac}}^{(2,2)}$ is continuous.

\begin{proof}[Proof of \cref{th_regularity_series}]
By Markov brothers' inequality~\citep{markov1890question}, for all $j\geq 0$ and $0\leq \mu\leq j$, we have $\max_{x\in[-1,1]} |p_j^{(\mu)}(x)| \leq j^{2\mu}$. 
Therefore, $|f_n^{(\mu)}(x)|\leq \sum_{j=0}^n |a_j| \|p_j^{(\mu)}\|_{\infty}\leq \sum_{j=0}^n j^{2\mu-\nu}$ so $|f_n^{(\mu)}(x)|<\infty$ if $\mu<(\nu-1)/2$. The result follows from~\cite[Thm.~7.17]{rudin1976principles}.
\end{proof}

\begin{proof}[Proof of \cref{th_regularity_GP}]
Since $f\sim \mathcal{GP}(0,K_{\text{Jac}}^{(2,2)})$, $f \sim \sum_{j=0}^\infty c_j\sqrt{\lambda_{j+1}}(1-x^2)\tilde{P}^{(2,2)}_j(x)$, where $c_j\sim\mathcal{N}(0,1)$ for $j\geq 0$. Let $f_n$ denote the truncation of $f$ after $n$ terms. By letting $M>0$ be the constant such that $\lambda_{j+1}\leq M (j+1)^{-\nu}$, we find that 
\[
\|f-f_n\|_\infty\leq S_n, \qquad S_n := 2\sqrt{M} \sum_{j=n+2}^\infty |c_{j-1}| j^{(1-\nu)/2},
\]
where we used $\max_{x\in[-1,1]} |(1-x^2)\tilde{P}_j^{(2,2)}(x)| \leq 2\sqrt{j+1}$ (cf.~\cref{prop_bound_ultra}). Thus, we have 
\[\mathbb{P}\left(\lim_{n\to\infty}\|f-f_n\|_\infty=0\right)\geq \mathbb{P}\left(\lim_{n\to\infty}S_n=0\right).\] Here, $S_n\sim X_n=\sum_{j=n+2}^\infty Y_j j^{(1-\nu)/2}$, where $Y_j$ follows a half-normal distribution~\citep{leone1961folded} with parameter $\sigma=1$ and the $(Y_j)_j$ are independent. We want to show that $X_n\xrightarrow{a.s.}0$. For $\epsilon>0$, using Chebyshev's inequality, we have:
\[\sum_{n=0}^\infty \mathbb{P}(|X_n|\geq \epsilon) \leq \frac{1}{\epsilon^2}\sum_{n=0}^\infty \left(1-\frac{2}{\pi}\right)\sum_{j=n+2}^\infty \frac{1}{j^{\nu-1}} \leq \frac{1}{\epsilon^2}\left(1-\frac{2}{\pi}\right)\frac{1}{\nu-2}\sum_{n=1}^\infty \frac{1}{n^{\nu-2}},\]
which is finite if $\nu>3$. Therefore, using Borel--Cantelli Lemma~\cite[Chapt.~2.3]{durrett2019probability}, $X_n$ converges to $0$ almost surely and
$\mathbb{P}(\lim_{n\to\infty}X_n=0)=1$. Finally,
\[\mathbb{P}\left(\lim_{n\to\infty}\|f-f_n\|_\infty=0\right)\geq \mathbb{P}\left(\lim_{n\to\infty}X_n=0\right)=1,\]
which proves that $\{f_n\}$ converges uniformly and hence $f$ is continuous with probability one. The statement for higher order derivatives follows the proof of \cref{th_regularity_series}.
\end{proof}

\clearpage

\section{Experiments with non-standard covariance matrices} \label{ap_exp}

In this section, we provide more detailed explanations regarding the numerical experiments conducted in \cref{sec_exp_matrix}. 

Sampling a random vector from a multivariate normal distribution with an arbitrary covariance matrix $\mtx{K}$ can be computationally expensive when the dimension, $n$, of the matrix is large as it requires the computation of a Cholesky factorization, which can be done in $\mathcal{O}(n^3)$ operations. We highlight that this step can be precomputed once, such that the overhead of the generalized SVD can be essentially expressed as the cost of an extra matrix-vector multiplication. Then, the difference in timings between standard and prior covariance matrices is marginal as shown by the right panel of \cref{fig_SVD_matrix}. Additionally, we would like to highlight that prior covariance matrices can be designed and derived using physical knowledge of the problem, such as a diffusion behaviour of the system, which can also significantly decrease the precomputation cost. As an example, in \cref{sec_exp_matrix}, we employ the discretized Green's function of the negative Laplacian operator with homogeneous Dirichlet boundary conditions, given by $\mathcal{L}u=-d^2u/dx^2$ on $[0,1]$, for which we know the eigenvalue decomposition. Hence, the eigenvalues and normalized eigenfunctions are respectively given by
\[\lambda_n=\frac{1}{\pi^2n^2},\qquad \psi_n(x)=\sqrt{2}\sin(n\pi x),\qquad x\in[0,1],\,n\geq 1.\]
Therefore, one can employ Mercer's representation (see~\cref{eq:KLexpansion}) to sample the random vectors and precompute the covariance matrix in $\mathcal{O}(n^2)$ operations. For a problem of size $n=2000$, it takes $0.16$s to precompute the matrix.

\end{document}